\documentclass[11pt,oneside,reqno]{amsart}

\usepackage{graphicx}
\usepackage{tikz-cd}
\usepackage{color}
\usepackage[normalem]{ulem}
\usepackage{mathtools}
\usepackage{amsfonts}
\usepackage{amscd,amsmath}
\usepackage{amssymb}
\usepackage{setspace}
\usepackage{blindtext}
\usepackage{graphicx}
\usepackage{xfrac}
\usepackage{amsmath}

\usepackage{gastex}
\usepackage{longtable}
\usepackage{lscape}
\usepackage{tabularx}
\usepackage{multicol}
\usepackage{verbatim}
\usepackage{multirow}
\usepackage{epsfig,graphics,graphicx}
\usepackage{geometry}
\usepackage{amscd}
\usepackage{xcolor}
\usepackage{hyperref}
\usepackage[utf8]{inputenc}

\usepackage{amssymb,amsmath,amsthm,verbatim,calc}
\numberwithin{equation}{section}
\usepackage[english]{babel}

\definecolor{armygreen}{rgb}{0.29, 0.33, 0.13}
\definecolor{darkgreen}{rgb}{0.0, 0.2, 0.13}

\newtheorem{thm}{Theorem}[section]
\newtheorem{lem}[thm]{Lemma}
\newtheorem{cor}[thm]{Corollary}
\newtheorem{ex}[thm]{Example}

\newtheorem{Def}[thm]{Definition}
\newtheorem{prop}[thm]{Proposition}
\newtheorem{rem}[thm]{Remark}

\newcommand{\Rem}{\begin{rem} \rm}
\newcommand{\bdfn}{\begin{Def} \rm}
\newcommand{\edfn}{\end{Def}}

\newcommand{\ba}{\begin{array}}
\newcommand{\ea}{\end{array}}

\begin{document}
\title[Generalized idempotents on the space of analytic functions]{Generalized idempotents on the space of analytic functions with bounded derivatives}

\author{Himanshu Kumar}
\address{Department of Applied Sciences, Indian Institute of Information Technology Allahabad, Prayagraj-211015, U.P., India.}
\email{himanshumath1507@gmail.com}

\author{Abdullah Bin Abu Baker}
\address{Department of Applied Sciences, Indian Institute of Information Technology Allahabad, Prayagraj-211015, U.P., India.}
\email{abdullahmath@gmail.com}

\author{Fernanda Botelho}
\address{Department of Mathematical Sciences, The University of Memphis, Memphis, TN 38152, USA.}
\email{mbotelho@memphis.edu}

\subjclass[2020]{46B04, 46T20, 47J05}
		
\keywords{Generalized bi-circular idempotents, Isometric reflection, Isometries on the space of analytic functions}

\date{\today}
		
\begin{abstract}
Let $X$ be a complex normed space. A map $P: X \rightarrow X$ is called idempotent if $P^2 = P$. A collection $\mathcal{C} = \{P_1, P_2\}$ of nonzero distinct orthogonal ($P_1P_2 = P_2P_1 = 0$) idempotent maps on $X$ is said to be a family of generalized bi-circular idempotents if there exist distinct unit modulus complex numbers $\lambda_1, \lambda_2$ such that $P_1 + P_2 = I$ (identity operator on $X$) and $\lambda_1P_1 + \lambda_2P_2$ is a surjective isometry on $X$. This generalizes the notion of generalized bi-circular projections on Banach spaces introduced by Fo\v{s}ner, Ili\v{s}evi\'{c} and Li \cite{MDC} to nonlinear maps. In this paper, we describe the structure of generalized bi-circular idempotents over the space of analytic functions on the open unit disk with bounded derivatives.	
\end{abstract}
		
\maketitle
		
\thispagestyle{empty}

\section{Introduction and Basic Results}

Let $X$ be a complex normed space. A map $P: X \rightarrow X$ is called idempotent if $P^2 = P$. A bounded (continuous) and idempotent linear operator on $X$ is called a projection. In this note, we extend a class of bi-contractive projections on Banach spaces, known as generalized bi-circular projections and introduced by Fo\v{s}ner, Ili\v{s}evi\'{c} and Li \cite{MDC}, to a nonlinear setting and study some of its properties. Let $I$ denote the identity operator on $X$, and let $\mathbb{T}$ denote the unit circle in the complex plane. A projection $P$ on $X$ is called a bi-circular projection if $P + \lambda (I-P)$ is a surjective isometry on $X$ for all $\lambda \in \mathbb{T}$. These projections were studied in different settings by Stach\'o and Zalar, see \cite{LB1, LB2}. Moreover, it was shown by Jamison that bi-circular projections are norm Hermitian \cite{JJ}. As a consequence, many results on bi-circular projections follow from previously known results on Hermitian operators. 

We recall that a map $T: X \rightarrow X$ is called an isometry if $\| Tx - Ty \| = \| x - y \|$ for all $x, y \in X$. Moreover, if $T$ is linear then $T$ is an isometry if and only if $||Tx|| = ||x||$ for all $x \in X$. Furthermore, if $T$ is an isometry, then $T^n$ is also an isometry, for $n \geq 2$.

The notion of bi-circular projection was generalized in \cite{MDC} by requiring that, for a projection $P$, there exists $\lambda \in \mathbb{T} \setminus \{1\}$ such that $P + \lambda (I-P)$ is a surjective isometry. Such a projection is called a generalized bi-circular projection. Over the last two decades, this class of projections has been extensively studied for several Banach spaces. We refer interested readers to the papers \cite{FJ, FJA, MDC, MH, DI1, DI2, P} and the references therein.

To extend the notion of generalized bi-circular projections to nonlinear maps, i.e., both $P$ (which is simply an idempotent map) and the isometry $T$, are not necessarily linear, we first observe that if $P$ is an idempotent map on $X$, then $I-P$ may not be idempotent. A simple example is to define $P: X \rightarrow X$ as $P(x) = \frac{x}{\|x\|}$ when $x \neq 0$ and $P(0) = 0$. We will revisit this example later. Therefore, in our definition, we must assume that $ I-P$ is also idempotent. Thus, we have the following definition. 

\begin{Def} \label{GBCI} Let $X$ be a complex normed space. A collection $\mathcal{C} = \{P_1, P_2\}$ of nonzero distinct idempotent maps on $X$ is said to be a family of {\bf \em generalized bi-circular idempotents} ($GBI$, for short) if 
\begin{enumerate}
\item $P_1 + P_2 =I$,
\item $P_1P_2 = 0 = P_2P_1$, and
\item there exist distinct $\lambda_1$, $\lambda_2 \in \mathbb{T}$ such that $T = \lambda_1 P_1 + \lambda_2 P_2$ is a surjective isometry on $X$.
\end{enumerate}
Each $P_i$, $i = 1,2$, is called a generalized bi-circular idempotent. We also say that $\mathcal{C} = \{P_1, P_2\}$ is a family of generalized bi-circular idempotents corresponding to the isometry $T$. We sometimes refer to $T$ as the isometry associated with the family $\mathcal{C}$. 
\end{Def}

Let $\mathbb{D}$ be the open unit disk in the complex plane. We consider the Hardy space $H^\infty(\mathbb{D})$, which is the commutative Banach algebra of all bounded analytic functions on $\mathbb{D}$, equipped with the supremum norm 
$$
\| f \|_\infty = \sup_{z \in \mathbb{D}} |f(z)|.
$$
We define $\mathcal{S}^\infty$ as the Banach space of all analytic functions $f$ on the open unit $\mathbb{D}$ whose derivative $f'$ belongs to $H^\infty(\mathbb{D})$ endowed with the norm 
$$
\|f\| = |f(0)| + \sup_{z \in \mathbb{D}} |f'(z)|.
$$
In this paper, we characterize $GBI$ and isometric reflections on the space $\mathcal{S}^\infty$. A map $T$ on a normed linear space $X$ is said to be of order $n$ if $T^n = I$ for some positive integer $n$ and $T^k \neq I$ for all positive integers $k<n$. If $n = 2$, $T$ is called a reflection. Moreover, an isometry $T$ of order $2$ is called an isometric reflection.

It is important to note that in \cite{FT}, Botelho and Miura initially introduced the notion of generalized bi-circular idempotents as follows. An idempotent map $P$ is called a generalized bi-circular idempotent if there exists a unimodular complex number $\lambda \neq 1$ such that $P+ \lambda (I - P)$ is a surjective isometry. Moreover, in \cite{FT, FT2}, the authors studied generalized bi-circular idempotents on the space of continuously differentiable complex-valued functions on $[0,1]$. 

In our definition of generalized bi-circular idempotents (Definition \ref{GBCI}), we have emphasized that $I - P$ is also an idempotent map. Unlike projections ($P$ is a projection if and only if $ I - P$ is a projection), this is a crucial assumption for the case of idempotent maps. Our next definition and lemma address this issue. 

\begin{Def}
A map $P$ on a normed space $X$ is said to be {\bf \em bi-potent} if both $P$ and $I-P$ are idempotent maps.
\end{Def}

\begin{lem}
Let $P$ be a map on a normed space $X$. If $P$ is idempotent, then $(I - P)P = 0$. Further, $P$ is bi-potent if and only if $P(I-P) = 0$.
\end{lem}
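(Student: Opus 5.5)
The whole argument is a pointwise computation with composition of (possibly nonlinear) maps. Since $P$ need not be linear, I will never distribute $P$ over a sum. I will only use the pointwise definitions $(A+B)(x)=A(x)+B(x)$, $(AB)(x)=A(B(x))$, and $I(x)=x$. The useful fact is that composition distributes on the \emph{left} factor even for nonlinear maps: $((A-B)C)(x)=A(C(x))-B(C(x))$. It generally fails on the right factor, and that asymmetry is what the argument turns on.

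For the first assertion, fix $x\in X$ and compute
\[
((I-P)P)(x)=P(x)-P(P(x))=P(x)-P^2(x)=0,
\]
using $P^2=P$. Hence $(I-P)P=0$.

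For the equivalence, I expand $(I-P)^2$ pointwise. Writing $y=(I-P)(x)=x-P(x)$, we get
\[
(I-P)^2(x)=y-P(y)=x-P(x)-P\bigl(x-P(x)\bigr)=(I-P)(x)-\bigl(P(I-P)\bigr)(x).
\]
Thus $(I-P)^2=(I-P)-P(I-P)$ as maps. So $(I-P)^2=I-P$ holds if and only if $P(I-P)=0$. Since $P$ is assumed idempotent, $P$ is bi-potent exactly when $I-P$ is idempotent, which is therefore equivalent to $P(I-P)=0$.

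The only real obstacle is notational discipline. One must keep $P(x-P(x))$ intact and not split it into $P(x)-P(P(x))$. That split is precisely the step that would make $P(I-P)=0$ automatic for projections but not for nonlinear idempotents, such as the normalization map $x\mapsto x/\|x\|$ from the introduction. Beyond that care, the argument is a direct two-line verification.
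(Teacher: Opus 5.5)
Your proof is correct and is essentially the paper's argument: the first claim is the same one-line computation, and the equivalence uses the same pointwise identity $(I-P)^2x=(I-P)x-P(I-P)x$, which relies only on left distributivity. You also state explicitly that the converse uses the standing hypothesis that $P$ is idempotent; the paper leaves this implicit, and without it $P(I-P)=0$ only gives that $I-P$ is idempotent.
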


\begin{proof}
Let $P$ be an idempotent map. Then $(I - P)P = P - P^2 = P - P = 0$. 
	
Suppose that $P$ is bi-potent. Then for any $x \in X$, $(I-P)(I-P)x = (I-P)x$. This implies that $I(I-P)x- P(I-P)x = (I-P)x$. It follows that $P(I-P)x = 0$. Hence, $P(I - P) = 0$. 
	
Conversely, let $x \in X$. Then $(I-P)^2x = (I-P)(I-P)x= I(I-P)x- P(I-P)x = (I-P)x$. Thus, $I-P$ is idempotent.	
\end{proof}	

An example of an idempotent map that is not bi-potent is given below.

\begin{ex}
Let $P: X \rightarrow X$ be defined as $P(x) = \frac{x}{\|x\|}$ when $x \neq 0$ and $P(0) = 0$. Then $P$ is idempotent, but $I-P$ is not. To verify the later claim, consider a nonzero $x \in X$ such that $\|x\| \neq 1$. Then $(I - P)x = x - \frac{x}{\|x \|} \neq 0$. It follows that $P(I-P)x \neq 0$. Therefore, $I-P$ is not an idempotent map. 
\end{ex}

\begin{lem} \label{lemma}
Let $\mathcal{C} = \{P_1, P_2 \}$ be a family of generalized bi-circular idempotents such that $T = \lambda_1 P_1 +\lambda_2 P_2$. Then $TP_1 = \lambda_1 P_1$ and $TP_2 = \lambda_2 P_2$.
\end{lem}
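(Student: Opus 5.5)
The plan is to unwind the definitions pointwise. The only subtlety is that the maps are nonlinear: we cannot distribute a composition $P_i(\cdot)$ over a sum. However, composition on the right always distributes, because $(A+B)C$ and $(\lambda A)C$ are defined pointwise by $(A+B)(Cx) = A(Cx)+B(Cx)$ and $(\lambda A)(Cx) = \lambda A(Cx)$. So for $x \in X$ we have
\begin{equation*}
TP_1x = (\lambda_1P_1+\lambda_2P_2)(P_1x) = \lambda_1P_1(P_1x) + \lambda_2P_2(P_1x).
\end{equation*}

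Next I apply the hypotheses of Definition \ref{GBCI}. Idempotence of $P_1$ gives $P_1(P_1x)=P_1x$. Orthogonality, namely $P_2P_1=0$ in condition (2), gives $P_2(P_1x)=0$. Hence $TP_1x=\lambda_1P_1x$ for every $x$, i.e.\ $TP_1=\lambda_1P_1$.

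The second identity follows by the symmetric computation:
\begin{equation*}
TP_2x=\lambda_1P_1(P_2x)+\lambda_2P_2(P_2x)=0+\lambda_2P_2x,
\end{equation*}
using $P_1P_2=0$ and $P_2^2=P_2$.

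There is no real obstacle. The one point to state carefully is that only right composition distributes for nonlinear maps. The argument never needs linearity of $P_i$, condition (1), or the isometry property of $T$.
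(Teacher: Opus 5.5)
Your proof is correct and follows the same route as the paper, which simply composes $T=\lambda_1P_1+\lambda_2P_2$ on the right with $P_1$ and $P_2$ and uses orthogonality and idempotence. Your explicit remark that only right composition distributes for nonlinear maps correctly makes precise what the paper's phrase ``multiplying both sides'' leaves implicit.
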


\begin{proof}
Multiplying both sides by $P_1$ and $P_2$ and using the orthogonality of $P_1$ and $P_2$, we obtain the desired result. 
\end{proof} 

Consider a family $\mathcal{C} = \{P_1, P_2 \}$ of $GBI$ corresponding to the isometry $T$. The Mazur-Ulam theorem \cite{MU} states that some translation of $T$ is real linear isometry. In other words, $T = T(0) + S$, where $S$ is real linear isometry on $X$. Our next proposition establishes that $T$ is real linear.

\begin{prop} \label{T0=0}
Let $\mathcal{C} = \{P_1, P_2 \}$ be a family of generalized bi-circular idempotents corresponding to the isometry $T$. Then $T(0) = 0$. Moreover, $T$ is real linear.
\end{prop}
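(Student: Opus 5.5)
Set $x_0 = 0$. By orthogonality, $P_1 P_2 = 0$ and $P_2 P_1 = 0$. The plan is to show $P_1(0)=P_2(0)=0$ and then apply Mazur--Ulam.

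\textbf{Step 1: $P_1(0)$ and $P_2(0)$.} Put $a = P_1(0)$ and $b = P_2(0)$. From $P_1 + P_2 = I$ at $0$ we get $a + b = 0$, so $b = -a$. Idempotence gives $P_1(a) = P_1(P_1(0)) = a$. Orthogonality $P_2P_1 = 0$ gives $P_2(a) = P_2(P_1(0)) = 0$; this is consistent with $P_1(a)+P_2(a)=a$ but does not yet force $a=0$. Now use $P_1P_2=0$ evaluated at $0$:
\[
0 = (P_1P_2)(0) = P_1(b) = P_1(-a).
\]
Since $P_2(0)=-a$, idempotence of $P_2$ gives $P_2(-a) = -a$. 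Orthogonality $P_2 P_1 = 0$ at $0$ gives $P_2(a) = 0$. Next compare with $(P_2P_1)(0) = P_2(a)$, which is $0$, and recall that the zero map in the definition means the constant $0$. Hence $(P_1P_2)(0) = 0$, i.e. $P_1(-a)=0$, and $(P_2P_1)(0)=0$, i.e. $P_2(a)=0$. Then $P_1(a) = a - P_2(a) = a$.

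The key observation is the following. $P_1(-a) = 0$, and also $P_1(-a) = P_1(P_2(0)) = (P_1P_2)(0)$. Apply $P_1$ to $-a = P_2(0)$; alternatively, apply the identity $P_2 = I - P_1$ at the point $-a$: $P_2(-a) = -a - P_1(-a) = -a$. So far everything is consistent, and this is the main obstacle: orthogonality alone at $0$ does not seem to kill $a$. I would therefore bring in $T$. We have $T(0) = \lambda_1 a + \lambda_2 b = (\lambda_1-\lambda_2)a$. Using Lemma \ref{lemma}, $T(a) = T(P_1(0)) = \lambda_1 P_1(0) = \lambda_1 a$, and $T(-a) = T(P_2(0)) = \lambda_2 P_2(0) = -\lambda_2 a$. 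Directly from the definition, $T(a) = \lambda_1 P_1(a) + \lambda_2 P_2(a) = \lambda_1 a$, and this is consistent. Since $T$ is an isometry, $\|T(a)-T(0)\| = \|a\|$, which gives $\|\lambda_1 a - (\lambda_1-\lambda_2)a\| = \|\lambda_2 a\| = \|a\|$, again trivially true. So I would try the identity $P_1(0) = P_1(P_1(0)+P_2(0))$: indeed $0 = a+b$, so $a = P_1(0) = P_1(a+b)$. This does not immediately reduce either. The cleanest route is to read the orthogonality hypothesis as $P_1P_2 = 0$ being the zero map. Then $P_1(x)$ is in the range of $P_1$, and applying $P_2$ gives $0$, so every element $y$ of the range of $P_1$ satisfies $P_2(y) = 0$ and hence $P_1(y) = y$. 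In particular $a = P_1(0)$ lies in the range of $P_1$. But $a = -b = -P_2(0)$, and $P_2(0)$ lies in the range of $P_2$, so $P_1(P_2(0)) = 0$. If I can show the range of $P_2$ is closed under negation, or use the fact that $0 = P_1(x)$ for $x$ in the range of $P_2$, I conclude as follows. Take $x = P_2(0) = -a$, so $P_1(-a) = 0$. Then apply $T$: $T(-a) = -\lambda_2 a$ and $T(0) = (\lambda_1-\lambda_2)a$. Using the isometry, $\|T(a) - T(-a)\| = \|2a\|$ gives $|\lambda_1+\lambda_2|\,\|a\| = 2\|a\|$. Since $\lambda_1 \neq \lambda_2$ are unimodular, $|\lambda_1+\lambda_2| < 2$, which forces $a = 0$. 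This is the decisive step.

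\textbf{Step 2: conclusion.} With $a=0$ we have $T(0) = 0$. Mazur--Ulam gives $T = T(0) + S$ with $S$ real linear, so $T = S$ is real linear.
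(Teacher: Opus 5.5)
Your argument is correct, and it reaches $T(0)=0$ by a different route from the paper's.

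The paper applies Mazur--Ulam first and writes $T=T(0)+S$ with $S$ real linear. It then adds the two identities $T(0)+SP_i=\lambda_iP_i$ coming from Lemma~\ref{lemma}. Additivity of $S$ together with $P_1+P_2=I$ turns the sum into $2T(0)+S=T(0)+S$.

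You instead look only at the two points $a=P_1(0)$ and $P_2(0)=-a$. Lemma~\ref{lemma} gives $T(a)=\lambda_1a$ and $T(-a)=-\lambda_2a$. Preservation of this single distance yields $|\lambda_1+\lambda_2|\,\|a\|=2\|a\|$. This forces $a=0$, because $|\lambda_1+\lambda_2|<2$ for distinct unimodular $\lambda_1,\lambda_2$.

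What your route buys: it is more elementary. The first claim uses neither surjectivity nor Mazur--Ulam, which you then need only for real linearity. It also gives $P_1(0)=P_2(0)=0$ directly. The paper's route leans on Mazur--Ulam for both claims but needs only a two-line algebraic manipulation.

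The ``main obstacle'' you describe is not real. Orthogonality kills $a$ as soon as you evaluate it at the point $a$ rather than at $0$. Since $P_2(a)=(P_2P_1)(0)=0$, you get $a=P_1(0)=P_1(P_2(a))=(P_1P_2)(a)=0$. So $T(0)=\lambda_1P_1(0)+\lambda_2P_2(0)=0$ holds for purely algebraic reasons, with no use of the isometry at all.

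In a write-up, drop the exploratory dead ends, including the remark about the range of $P_2$ being closed under negation, which plays no role. Keep either your decisive two-point computation or this one-line algebraic argument, followed by Mazur--Ulam for real linearity.
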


\begin{proof}
Let $T = \lambda_1 P_1 + \lambda_2 P_2$, where $\lambda_1, \lambda_2$ are distinct complex numbers of modulus one. By the Mazur-Ulam theorem, we can write $T = T(0) + S$, where $S$ is real linear surjective isometry. By Lemma \ref{lemma}, we have $TP_1 = \lambda_1 P_1$ and $TP_2 = \lambda_2 P_2$. Consequently, $T(0) +  SP_1 = \lambda_1 P_1$ and $T(0) +  SP_2 = \lambda_2 P_2$. Adding these two equations, we get $2 T(0) +  S P_1 + S P_2 =  \lambda_1P_1 + \lambda_2 P_2$. Since $S$ is real linear and $P_1 + P_2 = I$, it follows that $2T(0) + S = T(0) + S$. Hence, $T(0) = 0$, and thus, $T$ is real linear.
\end{proof}

\begin{cor} \label{gbireal}
Let $\mathcal{C} = \{P_1, P_2\}$ be a family of generalized bi-circular idempotents on a normed space $X$. Then $P_1$ and $P_2$ are real linear.
\end{cor}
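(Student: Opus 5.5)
We express $P_1$ and $P_2$ as real linear combinations of $I$ and $T$, and then use the real linearity of $T$ from Proposition \ref{T0=0}. The relations $P_1 + P_2 = I$ and $T = \lambda_1 P_1 + \lambda_2 P_2$ form a linear system in $P_1, P_2$ that is pointwise invertible because $\lambda_1 \neq \lambda_2$.

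Concretely, for each $x \in X$ we have $P_2 x = x - P_1 x$. Substituting into $Tx = \lambda_1 P_1 x + \lambda_2 P_2 x$ gives $Tx = (\lambda_1 - \lambda_2) P_1 x + \lambda_2 x$. Since $\lambda_1 \neq \lambda_2$, we can solve for $P_1$:
\[
P_1 x = \frac{Tx - \lambda_2 x}{\lambda_1 - \lambda_2}, \qquad P_2 x = \frac{\lambda_1 x - Tx}{\lambda_1 - \lambda_2}.
\]
This uses only the vector space operations in $X$ applied pointwise. It does not require additivity of $P_i$, because the relations in Definition \ref{GBCI} are identities of maps evaluated at each $x$.

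By Proposition \ref{T0=0}, $T$ is real linear. The maps $x \mapsto x$ and $x \mapsto \lambda_2 x$ are complex linear, hence real linear. Multiplication by the fixed complex scalar $(\lambda_1-\lambda_2)^{-1}$ is real linear on $X$. Therefore $P_1$ is a composition and sum of real linear maps and is real linear. For $P_2$ the argument is the same, or one can write $P_2 = I - P_1$.

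There is no real obstacle here, since all the difficulty sits in Proposition \ref{T0=0}, which we may assume. The one point to state carefully is that the inversion step is done pointwise using only $\lambda_1 \neq \lambda_2$. Real (rather than complex) linearity is the correct conclusion, because $T$ is known only to be real linear.
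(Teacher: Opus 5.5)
Your proposal is correct and matches the paper's proof. It solves $P_1+P_2=I$ and $T=\lambda_1P_1+\lambda_2P_2$ pointwise to get $P_1=\frac{T-\lambda_2 I}{\lambda_1-\lambda_2}$ and $P_2=\frac{T-\lambda_1 I}{\lambda_2-\lambda_1}$, then uses the real linearity of $T$ from Proposition \ref{T0=0}.
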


\begin{proof}
Let $T= \lambda_1P_1+ \lambda_2P_2$, for some distinct $\lambda_1, \lambda_2 \in \mathbb{T}$. Since $P_1 + P_2 = I$, we conclude that
\begin{equation} \label{gbiidentity}
P_1 = \frac{T- \lambda_2 I}{\lambda_1 -\lambda_2} \ \text{and} \ P_2 = \frac{T- \lambda_1 I}{\lambda_2 -\lambda_1}.
\end{equation}
Since $T$ is real linear, $P_1$ and $ P_2$ automatically become real linear.
\end{proof}

\begin{rem}
The above Proposition and Corollary imply only that the maps under consideration are real linear, not complex linear. So, essentially, they are nonlinear maps. 
\end{rem}

\begin{ex}
Define the real linear idempotents $P_1, P_2 : \mathbb{C} \to \mathbb{C}$, by
$$
P_1(z=x +iy) = x,\ \text{ and } \  P_2(z=x +iy) = iy \ \text{ for all } \ x,y \in \mathbb{R} .
$$
Obviously, $P_1 +P_2 =I$ and $P_1P_2=0 =P_2P_1$. For any $x,y \in \mathbb{R}$, let 
$$
T(x+iy) = \lambda_1P_1 + \lambda_2 P_2 =  \lambda_1 x + i \lambda_2 y,
$$
where $\lambda_1 \neq \lambda_2 \in \mathbb{T}$. It follows that
\begin{equation*}
|T(z)|^2 = |\lambda_1|^2 x^2 + |\lambda_2|^2 y^2 + xy\big( -i \lambda_1\overline{\lambda_2} + i \overline{\lambda_1}\lambda_2\big) = x^2 + y^2 - 2xy\,\Im \big(\overline{\lambda_1} \lambda_2\big),
\end{equation*}
where $\Im\big(\overline{\lambda_1} \lambda_2\big)$ denotes the imaginary part of $\overline{\lambda_1} \lambda_2$. Now, $T$ is an isometry if and only if $|T(z)|^2 = |z|^2 = x^2 + y^2$, for all $x,y \in \mathbb{R}$. Hence, $\Im\big(\overline{\lambda_1} \lambda_2\big) = 0$. Since $\overline{\lambda_1}\lambda_2 \in \mathbb{T}$ and $\lambda_1 \neq \lambda_2$, this concludes that $\lambda_1=-\lambda_2$. So, $T(z) = -\lambda_2 \overline{z} $. Clearly, $T$ is a surjective isometry on $\mathbb{C}$. Therefore, the collection $\{P_1, P_2\}$ is a family of generalized bi-circular idempotents for $\lambda_1 = -\lambda_2$.
\end{ex}

The structure of surjective isometries on the space $\mathcal{S}^\infty$ was characterized by Miura and Niwa in \cite{MN1}, and is stated in the following theorem.

\begin{thm}\cite[Theorem 2]{MN1} \label{iso}
If $T: \mathcal{S}^\infty \rightarrow \mathcal{S}^\infty$ is a surjective, not necessarily linear, isometry with respect to the norm $\|f\| = |f(0)| + \sup_{z \in \mathbb{D}} |f'(z)|$ for $f \in \mathcal{S}^\infty$,  then there exist constants $c_0, c_1, \mu \in \mathbb{T}$ and $a \in \mathbb{D}$ such that for all $f \in \mathcal{S}^\infty$, and $z \in \mathbb{D}$ 
\begin{equation*}
T(f)(z)= T(0)(z)+ c_0f(0) + \int_{[0,z]}c_1f'(\psi(\xi))d\xi, \tag*{(Form I)} \text{ or } 
\end{equation*}
\begin{equation*}
T(f)(z)= T(0)(z)+ \overline{c_0f(0)} + \int_{[0,z]} c_1f'(\psi(\xi))d\xi, \tag*{(Form II)} \text{ or } 
\end{equation*}
\begin{equation*}
T(f)(z)= T(0)(z)+ c_0f(0) + \int_{[0,z]} \overline{c_1f'(\psi(\overline{\xi}))}d\xi, \tag*{(Form III)} \text{ or } 
\end{equation*}
\begin{equation*}
T(f)(z)= T(0)(z)+ \overline{c_0f(0)} + \int_{[0,z]} \overline{c_1f'(\psi(\overline{\xi}))} d\xi, \tag*{(Form IV)} 
\end{equation*}
where $\psi(z) = \mu \frac{z-a}{\bar{a}z-1}$ for all $z \in \mathbb{D}$.  
\end{thm}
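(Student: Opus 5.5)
The plan is to reduce to a real linear surjective isometry, represent $\mathcal{S}^\infty$ as a function space, and transfer the problem to the extreme points of the dual ball. By the Mazur--Ulam theorem \cite{MU}, $S = T - T(0)$ is a real linear surjective isometry of $\mathcal{S}^\infty$. It therefore suffices to show that $S$ has one of the four forms with $T(0)$ removed.

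\textbf{Step 1 (embedding).} Let $M$ be the maximal ideal space of $H^\infty(\mathbb{D})$ and $\partial M$ its Shilov (equivalently Choquet) boundary. Write $\hat{g}$ for the Gelfand transform. Define
$$
\Phi(f)(\alpha,\beta,x) = \alpha f(0) + \beta\, \widehat{f'}(x), \qquad (\alpha,\beta,x) \in \mathbb{T}\times\mathbb{T}\times \partial M .
$$
Since $\sup_{\alpha,\beta}|\alpha u + \beta v| = |u|+|v|$, the map $\Phi$ is a complex linear isometry of $\mathcal{S}^\infty$ into $C(\mathbb{T}\times\mathbb{T}\times\partial M)$. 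I will then identify the Choquet boundary of $\Phi(\mathcal{S}^\infty)$. Using peak functions for $H^\infty$ at points of $\partial M$, together with the independence of the value $f(0)$ from $f'$, it should be all of $\mathbb{T}\times\mathbb{T}\times\partial M$. Consequently the extreme points of the unit ball of the real dual $(\mathcal{S}^\infty)^*_{\mathbb{R}}$ are exactly the functionals $f\mapsto \operatorname{Re}\big(\alpha f(0) + \beta \widehat{f'}(x)\big)$.

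\textbf{Step 2 (action on extreme points).} The real adjoint $S^*$ is a weak$^*$-homeomorphic isometry of the real dual. Hence it permutes these extreme points:
$$
\operatorname{Re}\big(\alpha (Sf)(0) + \beta\, \widehat{(Sf)'}(x)\big) = \operatorname{Re}\big(\alpha' f(0) + \beta' \widehat{f'}(x')\big),
$$
where $(\alpha',\beta',x')$ depends on $(\alpha,\beta,x)$. Substituting $i\alpha$, $i\beta$ and using continuity and the connectedness of $\mathbb{T}$, I expect to obtain the following description.
\begin{itemize}
\item $\alpha' = c_0\alpha$ or $\alpha' = \overline{c_0\alpha}$, with $c_0$ independent of $(\beta,x)$.
\item $\beta'$ and $x'$ depend only on $(\beta,x)$, with $\beta' = c_1(x)\beta$ or $\beta' = \overline{c_1(x)\beta}$.
\end{itemize}
This would give $(Sf)(0) = c_0 f(0)$ or $\overline{c_0f(0)}$, together with $\widehat{(Sf)'}(x) = c_1(x)\widehat{f'}(\varphi(x))$ or its conjugate.

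This decoupling is the main obstacle. One has to rule out that $S^*$ mixes the ``constant'' coordinate with the ``derivative'' coordinate. I would first try a cardinality/topology argument: the factor $\mathbb{T}$ carrying the evaluation at $0$ sits over a single point, while the derivative part sits over the huge space $\partial M$. Combined with the fact that a homeomorphism of $\mathbb{T}\times\mathbb{T}\times\partial M$ commuting with the circle actions must preserve the fibres, this should force the splitting. A second thing to verify is that the choice between the linear and the conjugate-linear alternative cannot vary with $x$; this should follow from connectedness in $\beta$ and continuity.

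\textbf{Step 3 (derivative part and integration).} The map $f'\mapsto (Sf)'$ is then a real linear surjective isometry of $H^\infty(\mathbb{D})$. Composing with complex conjugation where needed, I will reduce it to a complex linear one and apply the classical description of surjective isometries of $H^\infty$ (deLeeuw--Rudin--Wermer). This gives a unimodular constant times composition with a disk automorphism $\psi(z)=\mu\frac{z-a}{\bar a z-1}$. In the conjugate-linear case, analyticity of $(Sf)'$ forces the form $\overline{c_1 f'(\psi(\bar\xi))}$. The four combinations of the alternatives for the constant part and the derivative part yield Forms I--IV after integrating along $[0,z]$ and adding back $T(0)$.
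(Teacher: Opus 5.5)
The paper gives no proof of this statement; it is quoted from Miura--Niwa. Your proposal therefore has to stand on its own. Its skeleton is sound. Since $\mathcal{S}^\infty\cong\mathbb{C}\oplus_1 H^\infty$ via $f\mapsto(f(0),f')$, the extreme points of the dual ball are indeed $\operatorname{Re}(\alpha f(0)+\beta\,\widehat{f'}(x))$ with $x\in\partial M$, and for $H^\infty$ the Choquet boundary equals the Shilov boundary. Moreover, $S^*$ induces a homeomorphism $\Gamma$ of $\mathbb{T}\times\mathbb{T}\times\partial M$. However, the two steps you leave open are argued by mechanisms that do not work.

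\textbf{The decoupling.} $\Gamma$ has no reason to commute with rotations of $\alpha$ or $\beta$. Because $S$ is only real linear, $S^*$ commutes with multiplication by $-1$ only. Commuting with all $\gamma\in\mathbb{T}$ would make $S$ complex linear, which already fails for Forms II--IV. Topology also cannot separate the $\alpha$-circle from the $\beta$-circle inside a torus $\mathbb{T}^2\times\{x\}$, since a homeomorphism such as $(\alpha,\beta)\mapsto(\alpha\beta,\beta)$ mixes them.

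What topology does give you is this: $\partial M$, the maximal ideal space of $L^\infty$, is totally disconnected. Hence the components of $\mathbb{T}\times\mathbb{T}\times\partial M$ are the tori, and $x'=\varphi(x)$ for a homeomorphism $\varphi$. The splitting itself must then come from linear algebra.

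\begin{itemize}
\item Put $A(f)=(Sf)(0)$ and $B_x(f)=\widehat{(Sf)'}(x)$.
\item Suppose $f(0)=0$ and $\widehat{f'}(\varphi(x))=0$. The identities at $(\alpha,\beta,x)$ and $(\alpha,-\beta,x)$ give $\operatorname{Re}(\alpha A(f))=0$ for all $\alpha$, so $A(f)=0$. Since the kernels at two distinct points $\varphi(x)$ together span $\{f: f(0)=0\}$, you get $A=0$ there.
\item On constants write $A(c)=a_1c+a_2\overline{c}$ and $B_x(c)=b_1c+b_2\overline{c}$. The identity $\operatorname{Re}(\alpha A(c)+\beta B_x(c))=\operatorname{Re}(\alpha' c)$ forces $|\alpha a_1+\overline{\alpha a_2}+\beta b_1+\overline{\beta b_2}|=1$ on $\mathbb{T}^2$.
\item Comparing Fourier coefficients leaves two options. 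Either $B_x=0$ on constants and $A(c)\in\{c_0c,\,c_0\overline{c}\}$, or $A=0$ on constants. The second option, combined with the previous bullet, gives $A\equiv 0$ and contradicts surjectivity.
\end{itemize}

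\textbf{Uniformity of the linear/conjugate-linear choice in $x$.} You claim this follows ``from connectedness in $\beta$ and continuity''. That only shows the set $K\subseteq\partial M$ where the linear alternative holds is clopen. Because $\partial M$ is totally disconnected, this is no restriction. A priori $R(g)=\kappa\,\hat g\circ\varphi$ on $K$ and $R(g)=\kappa\,\overline{\hat g\circ\varphi}$ on $\partial M\setminus K$, where $R: f'\mapsto (Sf)'$. So ``composing with complex conjugation where needed'' is not a global operation on $H^\infty$, and deLeeuw--Rudin--Wermer cannot yet be invoked.

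The missing ingredient is function-theoretic.
\begin{itemize}
\item On $\partial M$ one has $R(1)-iR(i)=2\kappa\chi_K$ and $R(1)+iR(i)=2\kappa\chi_{\partial M\setminus K}$.
\item These are Gelfand transforms of elements of $H^\infty$. Their product vanishes on the Shilov boundary, hence is $0$ in $H^\infty$.
\item Since $H^\infty(\mathbb{D})$ is an integral domain and $|\kappa|=1$, either $K=\emptyset$ or $K=\partial M$.
\end{itemize}
Only after this does your Step 3 go through: reduce to a complex linear isometry $g\mapsto c_1\,g\circ\psi$, and in the conjugate case replace $z$ by $\overline{z}$ to obtain $\overline{c_1 f'(\psi(\overline{\xi}))}$.
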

	
\section{Isometric Reflections on $\mathcal{S}^\infty$}
In this section, we find the structure of the isometric reflections on $\mathcal{S}^\infty$. We will see in the next section that the isometry associated with a family of generalized bi-circular idempotents is actually an isometric reflection.

\begin{thm}
A map $T: \mathcal{S}^\infty \to \mathcal{S}^\infty$ is an isometric reflection of Form $(I)$ if and only if the following conditions hold:
\begin{enumerate}
\item[(i)] $c_0, c_1 \in \{\pm 1\}$,
\item[(ii)] $\psi: \mathbb{D} \to \mathbb{D}$ satisfies $\psi^2(z) = z$ for all $z \in \mathbb{D}$, and
\item[(iii)] $T(0)(z) + c_0 T(0)(0) + \int_{[0,z]} c_1 T(0)'(\psi(\xi)) \, d\xi = 0   \text{ for all } z \in\mathbb{D}$.
\end{enumerate}
\end{thm}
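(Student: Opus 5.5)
We compute $T^2$ directly from Form (I) and compare it with the identity. Write $T(f) = T(0) + S(f)$, where
$$S(f)(z) = c_0 f(0) + \int_{[0,z]} c_1 f'(\psi(\xi))\,d\xi$$
is the complex-linear part. Note that $S(f)(0) = c_0 f(0)$ and $S(f)' = c_1 (f'\circ\psi)$.

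Applying $T$ twice gives $T^2(f) = T(0) + S(T(0)) + S^2(f)$, since $S$ is linear. Here
$$S^2(f)(z) = c_0^2 f(0) + \int_{[0,z]} c_1^2 f'(\psi^2(\xi))\,d\xi .$$
So $T^2 = I$ holds if and only if
$$T(0) + S(T(0)) = S(0) - S^2(0) + \bigl(f - S^2 f\bigr) \quad \text{for all } f.$$
Putting $f=0$ shows that this splits into two conditions:
\begin{itemize}
\item[(a)] $T(0) + S(T(0)) = 0$, which written out is exactly condition (iii);
\item[(b)] $S^2 = I$.
\end{itemize}

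Next we analyse (b). Evaluating at $0$ with $f \equiv 1$ gives $c_0^2 = 1$, so $c_0 = \pm 1$. Differentiating gives $c_1^2 f'(\psi^2(z)) = f'(z)$ for all $f \in \mathcal{S}^\infty$.

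Taking $f(z) = z$, so $f' \equiv 1$, gives $c_1^2 = 1$, hence $c_1 = \pm 1$. Taking $f(z) = z^2/2$, so $f'(z) = z$, then gives $\psi^2(z) = z$. This yields (i) and (ii).

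Conversely, (i) and (ii) give $c_0^2 = c_1^2 = 1$ and $f'\circ\psi^2 = f'$. Hence $S^2 f$ has the same value at $0$ and the same derivative as $f$, so $S^2 f = f$ because $\mathbb{D}$ is connected. Together with (iii), which is (a), this gives $T^2 = I$.

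We also need $T \neq I$, so that $T$ has order exactly $2$. This is where the statement needs care: $T = I$ satisfies (i)–(iii), taking $c_0 = c_1 = 1$, $\psi = \mathrm{id}$ (that is, $a=0$, $\mu=1$) and $T(0)=0$. So either the theorem tacitly allows this trivial case, or the condition $T \neq I$ must be added to the converse direction. I would note this explicitly.

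I would also remark that such $T$ is automatically a surjective isometry by Theorem \ref{iso}. Indeed, maps of Form (I) with $c_0, c_1 \in \mathbb{T}$ and $\psi$ a disc automorphism are surjective isometries: surjectivity follows from $T^2 = I$ here, and the isometry property from $\|S f\| = |f(0)| + \sup |f'\circ\psi| = \|f\|$.

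The main obstacle is the decoupling step: it relies on the linearity of $S$ and on the fact that the test functions $1$, $z$, $z^2/2$ lie in $\mathcal{S}^\infty$. Both are immediate, so the argument is essentially computational.
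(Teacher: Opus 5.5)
Your proof is correct and follows the same route as the paper. You compute $T^2$ from Form (I), put $f=0$ to isolate condition (iii), read off $c_0^2=1$, $c_1^2=1$ and $\psi^2=\mathrm{id}$ from the remaining identity, and prove the converse by direct substitution; the splitting $T=T(0)+S$ and the explicit test functions $1$, $z$, $z^2/2$ just make the paper's computation tidier. Your caveat about $T\neq I$ is well taken: the identity map satisfies (i)--(iii), and the paper's converse only checks $T^2=I$, so order exactly two needs $T\neq I$ added. One small correction: $\psi=\mathrm{id}$ corresponds to $a=0$, $\mu=-1$, not $\mu=1$, which gives $\psi(z)=-z$.
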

\begin{proof}
$(\Longrightarrow)$ An isometry on $\mathcal{S}^\infty$ of Form $(I)$ is given by
\begin{equation*}
T(f)(z)= T(0)(z)+ c_0f(0) + \int_{[0,z]} c_1f'(\psi(\xi))d\xi,\ \forall \ f \in \mathcal{S}^\infty \text{ and } z \in \mathbb{D}.
\end{equation*}
This implies that 	
\begin{align*}
T(f)(0) = T(0)(0) + c_0f(0) \text{ and } T(f)'(z)= T(0)'(z)+ c_1 f'(\psi(z)).	    
\end{align*}
Since $T$ is an isometric reflection, we have $T^2(f)(z) = f(z)$ for all $f \in \mathcal{S}^\infty$ and $z \in \mathbb{D}$. It follows that 
\begin{equation*}
T(0)(z)+ c_0 T(f)(0) + \int_{[0,z]}c_1 T(f)'(\psi(\xi))d\xi = f(z).
\end{equation*}
Substituting the values of $T(f)(0)$ and $T(f)'(\psi(\xi))$ in the above equation, we obtain
\begin{equation} \label{mainref1}
T(0)(z)+ c_0 \left[ T(0)(0)+c_0f(0) \right]  +  c_1 \int_{[0,z]} \left[  T(0)'(\psi(\xi))+ c_1 f'(\psi^2(\xi)) \right] d\xi = f(z). 
\end{equation}
Choosing $f= 0$, we get
\begin{equation*} \label{nlt1}
T(0)(z) +c_0 T(0)(0)+ c_1 \int_{[0,z]} T(0)'(\psi(\xi)) d\xi =0.
\end{equation*}
If we put $z = 0$, in the above equation, we get $(c_0 + 1) T(0)(0) = 0$. Moreover, Equation \eqref{mainref1} takes the form	
\begin{equation} \label{maint1}
c_0^2f(0) + c_1^2\int_{[0,z]} f'(\psi^2(\xi))d\xi = f(z).
\end{equation}	
Now, by choosing $f = 1$ we conclude that $c_0^2 =1$. Thus,  $c_0 =\pm 1$. 
	
Differentiating Equation \eqref{maint1}, we get $c_1^2 f'(\psi^2(z))= f'(z)$. This implies that $c_1 =\pm 1$ and $\psi^2(z)=z$. 

$(\Longleftarrow)$ Suppose that conditions (i)-(iii) hold. We now prove that $T^2(f)=f$ for all $f \in \mathcal{S}^\infty$.
For this,
\begin{align*} 
T^2(f)(z) &= T(0)(z)+ c_0 \left[ T(0)(0)+c_0f(0) \right]  +  c_1 \int_{[0,z]} \left[  T(0)'(\psi(\xi))+ c_1 f'(\psi^2(\xi)) \right] d\xi \\
&= \Big[ T(0)(z) + c_0 T(0)(0)  +  c_1 \int_{[0,z]}   T(0)'(\psi(\xi)) d\xi  \Big] + c_0^2f(0) + c_1^2  \int_{[0,z]}  f'(\psi^2(\xi))  d\xi. 
\end{align*}
By condition (iii), it follows that
\begin{equation*}
T^2(f)(z) = c_0^2f(0) + c_1^2  \int_{[0,z]}  f'(\psi^2(\xi))  d\xi. 
\end{equation*}
Using conditions (i) and (ii), we obtain
\begin{align*} 
T^2(f)(z)&= f(0) + \int_{[0,z]} f'(\xi)  d\xi \\
 &= f(0) + f(z) - f(0) = f(z).
\end{align*}
Therefore, $T^2(f)=f$ for all $f\in\mathcal{S}^\infty$, and hence $T$ is an isometric reflection on $\mathcal{S}^\infty$. This completes the proof.
\end{proof}

\begin{thm}
A map $T: \mathcal{S}^\infty \to \mathcal{S}^\infty$ is an isometric reflection of Form $(II)$ if and only if the following conditions hold:
\begin{enumerate}
\item[(i)] $c_1 \in \{\pm 1\}$,
\item[(ii)] $\psi: \mathbb{D} \to \mathbb{D}$ satisfies $\psi^2(z) = z$ for all $z \in \mathbb{D}$, and
\item[(iii)] $T(0)(z) + \overline{c_0T(0)(0)} + c_1 \int_{[0,z]} T(0)'(\psi(\xi)) d\xi =0   \text{ for all } z \in\mathbb{D}$.
\end{enumerate}
\end{thm}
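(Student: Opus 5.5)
Following the pattern of the proof for Form $(I)$, the plan is to compute $T^2$ explicitly and compare it with the identity. For $f \in \mathcal{S}^\infty$, Form $(II)$ gives
\begin{equation*}
T(f)(0) = T(0)(0) + \overline{c_0 f(0)} \quad\text{and}\quad T(f)'(z) = T(0)'(z) + c_1 f'(\psi(z)).
\end{equation*}
Applying $T$ once more yields
\begin{equation*}
T^2(f)(z) = T(0)(z) + \overline{c_0\bigl(T(0)(0) + \overline{c_0 f(0)}\bigr)} + c_1\int_{[0,z]} \bigl[T(0)'(\psi(\xi)) + c_1 f'(\psi^2(\xi))\bigr]\,d\xi .
\end{equation*}
Since $|c_0| = 1$, we have $\overline{c_0}\,c_0 = 1$. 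The constant term therefore simplifies to $\overline{c_0 T(0)(0)} + f(0)$, so $c_0$ drops out of the $f$-dependent part. This explains why no condition on $c_0$ appears in (i).

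For $(\Longrightarrow)$, first put $f = 0$ in $T^2(f) = f$. This gives exactly condition (iii). Subtracting (iii) from the general identity leaves
\begin{equation*}
f(0) + c_1^2 \int_{[0,z]} f'(\psi^2(\xi))\,d\xi = f(z) \quad \text{for all } f \in \mathcal{S}^\infty,\ z \in \mathbb{D}.
\end{equation*}
Differentiating gives $c_1^2 f'(\psi^2(z)) = f'(z)$. Taking $f(z) = z$ gives $c_1^2 = 1$, so $c_1 = \pm 1$. Taking $f(z) = z^2/2$ then gives $\psi^2(z) = z$. These are (i) and (ii).

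For $(\Longleftarrow)$, substitute (iii) into the expression for $T^2(f)$, which leaves $f(0) + c_1^2 \int_{[0,z]} f'(\psi^2(\xi))\,d\xi$. Using (i) and (ii), this equals $f(0) + \int_{[0,z]} f'(\xi)\,d\xi = f(z)$, so $T^2 = I$.

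To conclude that $T$ has order exactly $2$, we also need $T \neq I$. I would treat this as part of the standing convention, as the Form $(I)$ proof does implicitly. Note that when $c_0$ is non-real, $T$ is automatically not the identity, because its constant term is conjugate-linear in $f(0)$.

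The only delicate step is the bookkeeping of the conjugation in the constant term. Everything else is routine, mirroring the Form $(I)$ argument.
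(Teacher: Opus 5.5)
Your proposal is correct and matches the paper's proof essentially step for step: you set $f=0$ in $T^2(f)=f$ to get (iii), subtract it, and differentiate to obtain (i) and (ii), then verify the converse by direct substitution, with the test functions $f(z)=z$ and $f(z)=z^2/2$ making explicit a step the paper leaves implicit. On the order-two point, you do not need any convention, and the restriction to non-real $c_0$ is unnecessary: $f(0)\mapsto\overline{c_0 f(0)}$ is conjugate-linear for every $c_0\in\mathbb{T}$, so a Form $(II)$ map is never the identity, and $T^2=I$ alone already makes $T$ a reflection.
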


\begin{proof} 

$(\Longrightarrow)$
An isometry of Form $(II)$ is given by 
\begin{equation*}\label{t2}
T(f)(z)= T(0)(z)+ \overline{c_0f(0)} + \int_{[0,z]}c_1f'(\psi(\xi))d\xi,\ \forall \ f \in \mathcal{S}^\infty \text{ and } z \in \mathbb{D}.
\end{equation*}
Thus, 
$$T(f)(0) = T(0)(0) + \overline{c_0f(0)} \text{ and } T(f)'(z)= T(0)'(z)+ c_1 f'(\psi(z)).$$
	
Since $T$ is an isometric reflection, $T^2(f)(z) = f(z)$ for all $f \in \mathcal{S}^\infty$ and $z \in \mathbb{D}$. This implies that 
\begin{equation*} \label{ct2}
T(0)(z)+ \overline{c_0T(f)(0)}  + \int_{[0,z]}c_1 T(f)'(\psi(\xi))d\xi =f(z),
\end{equation*}
or
\begin{equation} \label{mainref2}
T(0)(z)+ \overline{c_0T(0)(0)} + f(0) +  c_1 \int_{[0,z]} \left[T(0)'(\psi(\xi))+ c_1 f'(\psi^2(\xi))\right]d\xi = f(z).
\end{equation}
Choosing $f=0$, we get
\begin{equation*} \label{nlt2}
T(0)(z) + \overline{c_0T(0)(0)} + c_1 \int_{[0,z]} T(0)'(\psi(\xi)) d\xi =0.
\end{equation*}
So, Equation \eqref{mainref2} takes the form	
\begin{equation*} \label{maint2}
f(0) + c_1^2\int_{[0,z]} f'(\psi^2(\xi))d\xi = f(z).
\end{equation*}	
After differentiating this equation, we get $c_1^2 f'(\psi^2(z))= f'(z)$. It follows that $c_1 =\pm 1$ and $\psi^2(z)=z$.

$(\Longleftarrow)$ Suppose that conditions (i)-(iii) are satisfied. We show that $T^2(f) = f$ for all $f \in \mathcal{S}^\infty$. 
Indeed,
\begin{align*} 
T^2(f)(z) &= T(0)(z)+ \overline{c_0T(0)(0)} + f(0) +  c_1 \int_{[0,z]} \left[T(0)'(\psi(\xi))+ c_1 f'(\psi^2(\xi))\right]d\xi \\
&= \Big[ T(0)(z) + \overline{c_0T(0)(0)} + c_1 \int_{[0,z]} T(0)'(\psi(\xi)) d\xi  \Big] + f(0) + c_1^2  \int_{[0,z]}  f'(\psi^2(\xi))  d\xi.
\end{align*}
Using (iii), 
\begin{equation*}
T^2(f)(z) = f(0) + c_1^2  \int_{[0,z]}  f'(\psi^2(\xi))  d\xi .
\end{equation*}
Now, by using (i) and (ii), we get
\begin{equation*} 
T^2(f)(z) = f(0) + \int_{[0,z]} f'(\xi)  d\xi = f(0) + f(z) - f(0) = f(z).
\end{equation*}
Consequently, $T^2(f)=f$ for all $f\in\mathcal{S}^\infty$, hence $T$ is an isometric reflection on $\mathcal{S}^\infty$. This completes the proof.
\end{proof}

\begin{thm}
A map $T: \mathcal{S}^\infty \to \mathcal{S}^\infty$ is an isometric reflection of Form $(III)$ if and only if the following conditions hold:
\begin{enumerate}
\item[(i)] $c_0 \in \{\pm 1\}$,
\item[(ii)] $\psi: \mathbb{D} \to \mathbb{D}$ satisfies $\psi(\overline{\psi(\overline{z})})= z$ for all $z \in \mathbb{D}$, and
\item[(iii)] $T(0)(z) + c_0T(0)(0) +  \int_{[0,z]} \overline{c_1 T(0)'(\psi(\overline{\xi}))} d\xi =0   \text{ for all } z \in\mathbb{D}$.
\end{enumerate}
\end{thm}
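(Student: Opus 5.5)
The plan follows the template of the two preceding theorems: compute $T^2$ explicitly from Form (III), isolate the part coming from $f=0$, and read off conditions (i)–(iii) from what remains.

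First I record the two basic identities. From Form (III),
\[
T(f)(0)=T(0)(0)+c_0f(0), \qquad T(f)'(z)=T(0)'(z)+\overline{c_1f'(\psi(\overline z))}.
\]
I then apply Form (III) to $T(f)$ and substitute these identities. The term $\overline{c_1T(f)'(\psi(\overline\xi))}$ splits as
\[
\overline{c_1T(0)'(\psi(\overline\xi))}+\overline{c_1}\,c_1\,f'\bigl(\psi(\overline{\psi(\overline\xi)})\bigr).
\]
The key point is that the double conjugation restores complex linearity in $f$, and $|c_1|=1$ makes the coefficient equal to $1$. This gives
\[
T^2(f)(z)=\Big[T(0)(z)+c_0T(0)(0)+\int_{[0,z]}\overline{c_1T(0)'(\psi(\overline\xi))}\,d\xi\Big]+c_0^2f(0)+\int_{[0,z]}f'\bigl(\psi(\overline{\psi(\overline\xi)})\bigr)\,d\xi.
\]
This also explains why no condition on $c_1$ appears in the statement.

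For $(\Longrightarrow)$, assume $T^2=I$.
\begin{itemize}
\item Taking $f=0$ shows that the bracket vanishes identically, which is (iii).
\item The remaining identity is $c_0^2f(0)+\int_{[0,z]}f'(\phi(\xi))\,d\xi=f(z)$, where $\phi(\xi)=\psi(\overline{\psi(\overline\xi)})$.
\item Setting $f=1$ and $z=0$ gives $c_0^2=1$, so $c_0=\pm1$, which is (i).
\item Differentiating in $z$ gives $f'(\phi(z))=f'(z)$ for all $f$. Choosing $f(z)=z^2/2$ yields $\phi(z)=z$, which is (ii).
\end{itemize}

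For $(\Longleftarrow)$, I substitute (iii) into the displayed expression to kill the bracket. Then (i) and (ii) reduce the rest to $f(0)+\int_{[0,z]}f'(\xi)\,d\xi=f(z)$.

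The only delicate step is justifying the differentiation, since the integrand's dependence on $\xi$ is not a priori holomorphic. It helps that $\phi$ is holomorphic: $\overline{\psi(\overline z)}$ is a Möbius map, and composing it with $\psi$ stays holomorphic. The integral is therefore a genuine holomorphic primitive, and differentiation follows from the fundamental theorem of calculus along the segment, exactly as in the preceding theorems.
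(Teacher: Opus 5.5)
Your proposal is correct and follows the paper's proof essentially step for step. You expand $T^2(f)$ using $T(f)(0)$ and $T(f)'$, use $\overline{c_1}c_1=1$ to reach the term $f'(\psi(\overline{\psi(\overline{\xi})}))$, take $f=0$ to get (iii), read off $c_0^2=1$ and $\psi(\overline{\psi(\overline{z})})=z$ from what remains, and substitute back for the converse; your explicit test function $f(z)=z^2/2$ and the holomorphy remark just make precise steps the paper leaves implicit.
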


\begin{proof}

$(\Longrightarrow)$ The form of $T$ is given by 
\begin{equation*} \label{t3}
T(f)(z)= T(0)(z)+  c_0f(0) + \int_{[0,z]} \overline{c_1f'(\psi(\overline{\xi}))}d\xi \ \ \forall \ f \in \mathcal{S}^\infty \text{ and } z \in \mathbb{D}.
\end{equation*}
It follows that
\begin{align*}
T(f)(0) = T(0)(0) +  c_0f(0) \text{  and } 	T(f)'(z)= T(0)'(z)+ \overline{c_1f'(\psi(\overline{z}))}.
\end{align*}
Since $T^2(f)(z) = f(z)$ for all $f \in \mathcal{S}^\infty$ and $z \in \mathbb{D}$, we have 
\begin{equation*} \label{ct3}
T(0)(z)+ c_0T(f)(0)  + \int_{[0,z]} \overline{c_1 T(f)'(\psi(\overline{\xi}))}d\xi =f(z).
\end{equation*}
This implies that 
\begin{equation} \label{mainref3}
T(0)(z)+ c_0T(0)(0) + c_0^2 f(0) +  \int_{[0,z]} \left[\overline{c_1 T(0)'(\psi( \overline{\xi}))} + f'(\psi (\overline{\psi(\overline{\xi})})) \right] d\xi = f(z).
\end{equation}
Selecting $f=0$, 
\begin{equation*} \label{nlt3}
T(0)(z) + c_0T(0)(0) +  \int_{[0,z]} \overline{c_1 T(0)'(\psi(\overline{\xi}))} d\xi =0.
\end{equation*}
So, Equation \eqref{mainref3} becomes 
\begin{equation*} \label{maint3}
c_0^2 f(0) + \int_{[0,z]}f'(\psi (\overline{\psi(\overline{\xi}))})d\xi = f(z).
\end{equation*}	
Hence, $c_0 =\pm 1$ and $\psi(\overline{\psi(\overline{z})})= z$. Since $\psi(z) = \mu \frac{z-a}{\bar{a}z-1}$, putting $z = \overline{a}$ in the identity $\psi(\overline{\psi(\overline{z})})= z$, we obtain $\psi (0) = \overline{a}$. This implies that $\mu a = \overline{a}$. 

$(\Longleftarrow)$ Assume that conditions (i)-(iii) are satisfied. We claim that $T^2(f)=f$ for all $f \in \mathcal{S}^\infty$. 
To this end, consider
\begin{align*} 
T^2(f)(z) &= T(0)(z)+ c_0T(0)(0) + c_0^2 f(0) +  \int_{[0,z]} \left[\overline{c_1 T(0)'(\psi( \overline{\xi}))} + f'(\psi (\overline{\psi(\overline{\xi})})) \right] d\xi \\
&= \Big[ T(0)(z) + c_0T(0)(0) +  \int_{[0,z]} \overline{c_1 T(0)'(\psi(\overline{\xi}))} d\xi  \Big] + c_0^2f(0) +  \int_{[0,z]}  f'(\psi (\overline{\psi(\overline{\xi})}))  d\xi.
\end{align*}
In view of conditions (i)-(iii), we deduce that
\begin{equation*} 
T^2(f)(z)= f(0) + \int_{[0,z]} f'(\xi)  d\xi = f(0) + f(z) - f(0) = f(z).
\end{equation*}
Therefore, $T$ is an isometric reflection on $\mathcal{S}^\infty$.  
\end{proof}
\begin{thm}
A map $T: \mathcal{S}^\infty \to \mathcal{S}^\infty$ is an isometric reflection of Form $(IV)$ if and only if the following conditions hold:
\begin{enumerate} 
\item[(i)] $\psi: \mathbb{D} \to \mathbb{D}$ satisfies $\psi(\overline{\psi(\overline{z})})= z$ for all $z \in \mathbb{D}$, and
\item[(ii)] $T(0)(z) + c_0T(0)(0) +  \int_{[0,z]} \overline{c_1 T(0)'(\psi(\overline{\xi}))} d\xi =0   \text{ for all } z \in\mathbb{D}$.
\end{enumerate}
\end{thm}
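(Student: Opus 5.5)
The plan is to argue exactly as for Forms (I)--(III): compute $T^2$ explicitly and compare it with the identity. Since $T$ is of Form (IV), evaluating at $0$ and differentiating gives
\begin{equation*}
T(f)(0) = T(0)(0) + \overline{c_0 f(0)}, \qquad T(f)'(z) = T(0)'(z) + \overline{c_1 f'(\psi(\overline{z}))}.
\end{equation*}
We then substitute $T(f)$ into the Form (IV) formula. Because $|c_0| = |c_1| = 1$, two conjugations cancel and $\overline{c_0}\,c_0 = \overline{c_1}\,c_1 = 1$. This gives, for all $f \in \mathcal{S}^\infty$ and $z \in \mathbb{D}$,
\begin{equation*}
T^2(f)(z) = \Big[ T(0)(z) + \overline{c_0 T(0)(0)} + \int_{[0,z]} \overline{c_1 T(0)'(\psi(\overline{\xi}))}\, d\xi \Big] + f(0) + \int_{[0,z]} f'\big(\psi(\overline{\psi(\overline{\xi})})\big)\, d\xi .
\end{equation*}
Unlike Forms (I) and (III), no condition on $c_0$ survives, since $|c_0|^2 = 1$ automatically. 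The same holds for $c_1$, as in Form (III). This explains why only two conditions appear.

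$(\Longrightarrow)$ Assume $T^2 = I$. Taking $f = 0$ shows that the bracketed expression vanishes identically. This is condition (ii), where the constant term is the one produced by the computation, namely $\overline{c_0T(0)(0)}$, exactly as in Form (II); I would point out this conjugate explicitly when writing the proof. With the bracket removed, $T^2 = I$ reads
\begin{equation*}
f(0) + \int_{[0,z]} f'\big(\psi(\overline{\psi(\overline{\xi})})\big)\, d\xi = f(z) .
\end{equation*}
Differentiating gives $f'\big(\psi(\overline{\psi(\overline{z})})\big) = f'(z)$. Choosing $f(z) = z^2/2$, so that $f'(z) = z$, yields condition (i): $\psi(\overline{\psi(\overline{z})}) = z$ for all $z \in \mathbb{D}$. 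Note that $z \mapsto \psi(\overline{\psi(\overline{z})})$ is holomorphic, being a composition of two antiholomorphic maps, so the differentiation step is legitimate.

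$(\Longleftarrow)$ Assume (i) and (ii). Condition (ii) kills the bracket, and (i) reduces the remaining integral to $\int_{[0,z]} f'(\xi)\,d\xi = f(z) - f(0)$. Hence $T^2(f) = f$ for every $f$, and $T$ is an isometric reflection.

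The only delicate point is the bookkeeping of conjugations in the second application of $T$. In particular, one must check that $\overline{c_1\, \overline{c_1 f'(\psi(\overline{w}))}}$ with $w = \psi(\overline{\xi})$ simplifies to $f'\big(\psi(\overline{\psi(\overline{\xi})})\big)$. One must also check that the constant term is $\overline{c_0 T(0)(0)}$ rather than $c_0T(0)(0)$. Everything else is identical to the proofs for Forms (II) and (III).
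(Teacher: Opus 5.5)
Your proposal is correct and follows the paper's proof essentially step for step. You expand $T^2$ using $|c_0|=|c_1|=1$, set $f=0$ to isolate the condition on $T(0)$, and then differentiate the remaining identity; your explicit test function $f(z)=z^2/2$ just makes the paper's final step precise.

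You are also right about the constant term. The paper's own proof derives and uses $\overline{c_0T(0)(0)}$, so condition (ii) as printed is missing a conjugate, and read literally the equivalence would be false. For instance, $c_0=c_1=1$, $\psi(z)=z$, $T(0)\equiv i$ gives a Form (IV) reflection that violates the printed (ii).
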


\begin{proof} 

$(\Longrightarrow)$ The isometry $T$ is given by 
\begin{equation*} \label{t4}
T(f)(z)= T(0)(z)+  \overline{c_0f(0)} + \int_{[0,z]} \overline{c_1f'(\psi(\overline{\xi}))}d\xi,\ \forall \ f \in \mathcal{S}^\infty \text{ and } z \in \mathbb{D}.
\end{equation*}
It is immediate that 
\begin{align*}
T(f)(0) = T(0)(0) +  \overline{c_0f(0)} \text{ and } T(f)'(z)= T(0)'(z)+ \overline{c_1f'(\psi(\overline{z}))}.
\end{align*}
Moreover, $T^2(f)(z) = f(z)$ implies that 

\begin{equation*} \label{ct4}
T(0)(z)+ \overline{c_0T(f)(0)}   + \int_{[0,z]} \overline{c_1 T(f)'(\psi(\overline{\xi}))}d\xi = f(z).
\end{equation*}
	
Therefore, 
	
\begin{equation} \label{mainref4}
T(0)(z)+ \overline{c_0T(0)(0)} +  f(0) +  \int_{[0,z]} \left[ \overline{c_1 T(0)'(\psi( \overline{\xi}))} + f'(\psi (\overline{\psi(\overline{\xi})})) \right] d\xi = f(z). 
\end{equation}
	
Taking $f=0$, we obtain
	
\begin{equation*} \label{nlt4}
T(0)(z) + \overline{c_0 T(0)(0)} +  \int_{[0,z]}\overline{c_1 T(0)'(\psi(\overline{\xi}))} d\xi = 0.
\end{equation*}
	
Thus, Equation \eqref{mainref4} takes the form 

\begin{equation*} \label{maint4}
f(0) + \int_{[0,z]}f'(\psi (\overline{\psi(\overline{\xi}))})d\xi = f(z).
\end{equation*}	
	
Therefore, $\psi(\overline{\psi(\overline{z})})= z$. This implies that $\mu a = \overline{a}$. 

$(\Longleftarrow)$ Let conditions (i) and (ii) be satisfied. We proceed to verify that $T^2(f)=f$ for all $f \in \mathcal{S}^\infty$. Observe that
\begin{align*} 
T^2(f)(z) &= T(0)(z)+ \overline{c_0T(0)(0)} +  f(0) +  \int_{[0,z]} \left[ \overline{c_1 T(0)'(\psi( \overline{\xi}))} + f'(\psi (\overline{\psi(\overline{\xi})})) \right] d\xi  \\
&= \Big[T(0)(z) + \overline{c_0 T(0)(0)} +  \int_{[0,z]}\overline{c_1 T(0)'(\psi(\overline{\xi}))} d\xi \Big] + f(0) +   \int_{[0,z]}  f'(\psi(\overline{\psi(\overline{\xi})}))  d\xi .
\end{align*}
Finally, using conditions (i) and (ii), we conclude that
\begin{equation*} 
T^2(f)(z)= f(0) + \int_{[0,z]} f'(\xi)  d\xi = f(0) + f(z) - f(0) = f(z).
\end{equation*}
Consequently, $T$ is an isometric reflection on $\mathcal{S}^\infty$, which completes the proof.

\end{proof}

\section{Structure of Generalized bi-circular idempotents on $\mathcal{S}^\infty$}

In this section, we characterize generalized bi-circular idempotents on $\mathcal{S}^\infty$.

\begin{thm} \label{formI}
If the collection $\mathcal{C} = \{P_1, P_2\}$ is a family of generalized bi-circular idempotents on $\mathcal{S}^\infty$ corresponding to an isometry $T$ of the Form $(I)$, then one of the following holds:
\begin{enumerate}
\item $\mathcal{C}$ is a family of generalized bi-circular projections such that each $P_i$ is the average of the identity operator and an isometric reflection. Moreover, we have $\lambda_1 + \lambda_2 = 0$, $c_0, c_1 \in \{\pm \lambda_1\}$ and $\psi^2(z) = z$ for all $z \in \mathbb{D}$.

\item $\mathcal{C}$ is a family of bi-circular projections. In this case, $c_0, c_1 \in \{\lambda_1,\lambda_2\}$, $c_1 \neq c_2$ and $\psi(z) = z$ for all $z \in \mathbb{D}$.
\end{enumerate} 
\end{thm}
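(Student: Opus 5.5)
Since $\{P_1,P_2\}$ is a family of GBI for $T=\lambda_1P_1+\lambda_2P_2$, Proposition \ref{T0=0} gives $T(0)=0$. Then Form $(I)$ reduces to $T(f)(z)=c_0f(0)+\int_{[0,z]}c_1f'(\psi(\xi))\,d\xi$, which is complex linear. By \eqref{gbiidentity}, $P_1$ and $P_2$ are complex linear. They are bounded because $T$ is an isometry, so they are projections and $\mathcal{C}$ is a family of generalized bi-circular projections.

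The key algebraic step is to turn idempotency of $P_1=(T-\lambda_2 I)/(\lambda_1-\lambda_2)$ into a polynomial identity for $T$. Squaring, $P_1^2=P_1$ is equivalent to
\[
T^2-(\lambda_1+\lambda_2)T+\lambda_1\lambda_2 I=0 .
\]
Equivalently, $P_1P_2=0$. I will apply this identity to test functions. For $f=1$ we have $T(1)=c_0$, so $c_0^2-(\lambda_1+\lambda_2)c_0+\lambda_1\lambda_2=0$, i.e. $c_0\in\{\lambda_1,\lambda_2\}$. Differentiating the identity at a general $f$ gives
\[
c_1^2f'(\psi^2(z))-(\lambda_1+\lambda_2)c_1f'(\psi(z))+\lambda_1\lambda_2f'(z)=0
\]
for all $f$ with $f(0)=0$. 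Taking $f=z$ (so $f'\equiv 1$) yields $c_1\in\{\lambda_1,\lambda_2\}$. Taking $f=z^2/2$ (so $f'(w)=w$) yields the functional equation
\[
c_1^2\psi^2(z)-(\lambda_1+\lambda_2)c_1\psi(z)+\lambda_1\lambda_2 z=0 .
\]

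The case split is governed by whether $\lambda_1+\lambda_2=0$, and I expect this to be the main obstacle. If $\lambda_1+\lambda_2\neq 0$, I will use the explicit M\"obius form $\psi(z)=\mu\frac{z-a}{\bar a z-1}$ to show $\psi$ must be the identity. Since $\lambda_1\lambda_2/c_1^2$ is a unimodular constant, the equation says $\psi^2(z)=\alpha\psi(z)-\beta z$. Comparing this with the linear-fractional form of $\psi\circ\psi$, for instance by testing powers $f'(w)=w^k$, should force $a=0$. Then $\psi(z)=\mu z$, and the equation becomes $c_1^2\mu^2-(\lambda_1+\lambda_2)c_1\mu+\lambda_1\lambda_2=0$, so $c_1\mu\in\{\lambda_1,\lambda_2\}$. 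Combining this with $c_1\in\{\lambda_1,\lambda_2\}$ leaves either $\mu=1$ or $\mu=\lambda_2/\lambda_1$ (with $c_1=\lambda_1$), or the symmetric case. The case $\mu\neq1$ must then be excluded by testing $f'(w)=w^2$. That gives $\mu^2c_1\in\{\lambda_1,\lambda_2\}$, which forces $\mu^2=\lambda_2/\lambda_1=\mu$, a contradiction unless $\mu=1$. The delicate point is handling the $a\neq 0$ possibility; there I would compare pole locations of both sides in the extended plane.

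Once $\psi=\mathrm{id}$ and $c_0,c_1\in\{\lambda_1,\lambda_2\}$, $T$ acts as $c_0$ on constants and $c_1$ on $\{f:f(0)=0\}$. Then $P_1$ and $P_2$ are the corresponding coordinate projections, and $P+\lambda(I-P)$ is an isometry for all $\lambda\in\mathbb{T}$ by the $\ell^1$-type norm. So $\mathcal{C}$ is bi-circular, and nontriviality gives $c_0\neq c_1$; this is case (2).

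If $\lambda_1+\lambda_2=0$, the identity reads $T^2=\lambda_1^2 I$. Thus $R=\bar\lambda_1T$ is an isometric reflection of Form $(I)$ with $T(0)=0$, and $P_1=\frac{1}{2}(I+R)$, $P_2=\frac{1}{2}(I-R)$. The theorem on isometric reflections of Form $(I)$ applied to $R$, whose constants are $\bar\lambda_1c_0$ and $\bar\lambda_1c_1$, gives $\bar\lambda_1c_0,\bar\lambda_1c_1\in\{\pm1\}$ and $\psi^2=\mathrm{id}$. That is, $c_0,c_1\in\{\pm\lambda_1\}$, which is case (1).
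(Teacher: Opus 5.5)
Your proposal is correct in substance, but at the one place where the real work happens it takes a different and heavier route than the paper.

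\textbf{How the paper does it.} After obtaining $c_1^2f'(\psi^2(z))-(\lambda_1+\lambda_2)c_1f'(\psi(z))+\lambda_1\lambda_2f'(z)=0$, the paper never uses the M\"obius form of $\psi$. Instead it takes $f'$ to be a polynomial with prescribed values at the points $z,\psi(z),\psi^2(z)$.
\begin{itemize}
\item If $\psi^2(z)\neq z$ (so also $\psi(z)\neq z$), the choice $f'(z)=1$, $f'(\psi(z))=f'(\psi^2(z))=0$ gives $\lambda_1\lambda_2=0$. Hence $\psi^2=\mathrm{id}$.
\item If then $\psi(z)\neq z$ for some $z$, the choice $f'(z)=0$, $f'(\psi(z))=1$ gives $\lambda_1+\lambda_2=0$.
\end{itemize}
So the paper splits on whether $\psi=\mathrm{id}$. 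The situation you call the main obstacle ($\lambda_1+\lambda_2\neq0$, $a\neq0$) never has to be analyzed.

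Your own monomial tests already contain this argument. Fix $z$ and use $f'(w)=w^k$ for all $k\ge0$. This gives a Vandermonde system in $\psi^2(z),\psi(z),z$ whose outer coefficients $c_1^2$ and $\lambda_1\lambda_2$ are nonzero. It forces either $\psi(z)=z$, or $\psi^2(z)=z\neq\psi(z)$ together with $\lambda_1+\lambda_2=0$.

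\textbf{Trade-offs.} Your route gives a clean treatment of the case $\lambda_1+\lambda_2=0$ by applying the reflection theorem to $R=\bar\lambda_1T$. That is legitimate, since the forward direction of that theorem only uses $R^2=I$. The paper's route works for an arbitrary self-map $\psi$ and needs no case analysis on $a$ and $\mu$.

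\textbf{Loose ends if you keep your route.}

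First, write out the $a\neq0$ step rather than asserting it. Both sides of
$c_1^2\psi(\psi(z))=(\lambda_1+\lambda_2)c_1\psi(z)-\lambda_1\lambda_2z$
are rational and agree on $\mathbb{D}$, hence on the Riemann sphere. When $a\neq0$ and $\lambda_1+\lambda_2\neq0$, the right side has simple poles at $1/\bar a$ and at $\infty$ (because $\psi(\infty)=\mu/\bar a$ is finite), so it has degree $2$. But $\psi\circ\psi$ is M\"obius of degree $1$. So the pole comparison you suggest does work.

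Second, two small points in the $a=0$ case.
\begin{itemize}
\item For $a=0$ one actually has $\psi(z)=-\mu z$. This is a harmless relabeling.
\item In excluding $\mu\neq1$, you overlooked the branch $\mu^2c_1=\lambda_1$ when $c_1=\lambda_1$, i.e.\ $\mu^2=1$, $\mu=-1$. This gives $\lambda_2=-\lambda_1$. It is ruled out only by your case hypothesis $\lambda_1+\lambda_2\neq0$, which you should invoke explicitly.
\end{itemize}

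With these fixes the argument is complete, including the identification of case (2) via $c_0\neq c_1$ and the $\ell^1$-type norm.
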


\begin{proof}

Let $\mathcal{C} = \{P_1, P_2\}$ be a family of generalized bi-circular idempotents on $\mathcal{S}^\infty$ corresponding to an isometry $T$ of Form $(I)$. Then $T = \lambda_1P_1 + \lambda_2 P_2$, where $\lambda_1, \lambda_2 \in \mathbb{T}$. Proposition \ref{T0=0} implies that $T(0) = 0$. Thus, $T$ takes the form
\begin{equation*} \label{linearisometry}
T(f)(z)= c_0f(0) + \int_{[0,z]}c_1f'(\psi(\xi))d\xi, \ \forall \ f \in \mathcal{S}^\infty \text{ and } z \in \mathbb{D}. 
\end{equation*}
We observe that $T$ is a linear surjective isometry on $\mathcal{S}^\infty$. It follows from Equation \eqref{gbiidentity} that $P_1$, $P_2$ are projections. It follows from \cite[Lemma 1.1]{MDC} that $(T - \lambda_1 I)(T - \lambda_2 I) = 0$ or 
\begin{equation} \label{identityl}
T^2(f)(z)-(\lambda_1+\lambda_2)T(f)(z)+\lambda_1\lambda_2 f(z)=0, \ \forall \ f \in \mathcal{S}^\infty  \text{ and } z \in \mathbb{D}.
\end{equation}
Further, 
\begin{equation}  \label{p}
P_i = \frac{T - \lambda_j I}{\lambda_i - \lambda_j},\ i,j = 1,2,\ i \neq j.
\end{equation}
Now, using the form of $T$ in Equation \eqref{identityl}, we obtain
\begin{equation} \label{main1}
\left[c_0^2 f(0) + c_1^2\int_{[0,z]}f'(\psi^2(\xi))d\xi\right] -(\lambda_1+\lambda_2)\left[ c_0f(0) + c_1 \int_{[0,z]}f'(\psi(\xi))d\xi\right] + \lambda_1\lambda_2 f(z) =0.
\end{equation}
Choosing $f = 1$ in the above equation, we get
\begin{equation*}
c_0^2 -(\lambda_1+\lambda_2)c_0 + \lambda_1\lambda_2 =0 \implies c_0 =\lambda_1,\lambda_2.
\end{equation*}
Again, choosing $f= id$, we have
\begin{equation*}
c_1^2 z -(\lambda_1+\lambda_2)c_1z +\lambda_1\lambda_2 z  = 0 \implies  c_1 =\lambda_1,\lambda_2.
\end{equation*}
Differentiating Equation \eqref{main1}, 
\begin{equation} \label{main1der}
c_1^2 f'(\psi^2(z))- (\lambda_1+\lambda_2) c_1 f'(\psi(z)) +\lambda_1\lambda_2 f'(z)=0.
\end{equation}
We claim that $\psi^2(z) =z$ for all $z \in \mathbb{D}$. Suppose that $\psi^2(z) \neq z$ for some $z \in \mathbb{D}$. Then $\psi(z) \neq z$ as well. Choosing a function $f \in \mathcal{S}^\infty$ such that $f'(z) = 1$ and $f'(\psi (z)) = f'(\psi^2(z)) = 0$ in Equation \eqref{main1der}, we have $\lambda_1 \lambda_2 = 0$. This is a contradiction since $\lambda_1$, $\lambda_1 \in \mathbb{T}$. Thus, $\psi^2(z) =z$ for all $z \in \mathbb{D}$. 

Now, if $\psi(z) \neq z$ for some $z \in \mathbb{D}$, choosing a function $f \in \mathcal{S}^\infty$ such that $f'(z) = 0$ and $f'(\psi(z)) = 1$, we obtain $c_1 (\lambda_1+\lambda_2) = 0$. Since $c_1 \in \mathbb{T}$, we conclude $\lambda_1 + \lambda_2 = 0$. Hence, $c_0 = \pm \lambda_1$ and $c_1 = \pm \lambda_1$. So, Equation \eqref{p} becomes
\begin{equation*} \label{gbi1}
P_i = \frac{T - \lambda_j I}{\lambda_i - \lambda_j} = \frac{T + \lambda_i I}{2 \lambda_i} = \frac{I + S}{2},
\end{equation*}
where $S = \frac{1}{\lambda_i} T$. It is clear that $S^2 = I$. This proves the first assertion. 

If $\psi(z) = z$ for all $z \in \mathbb{D}$, then $T(f)(z) = (c_0 - c_1)f(0) + c_1 f(z)$. Equation \eqref{p} implies that 
\begin{equation*} \label{bi1} 
P_i (f)(z) = \frac{(c_0 - c_1)f(0)+(c_1  - \lambda_j)f(z)}{\lambda_i - \lambda_j},\ i,j = 1,2,\ i \neq j.
\end{equation*}
If $c_0 = c_1 = \lambda_1$, then $T(f)(z) = \lambda_1 f(z)$. Hence, $P_1(f)(z) = f(z)$ and $P_2 = 0$. Similarly, if $c_0 = c_1 = \lambda_2$, we get $P_1 = 0$ and $P_2 (f)(z) = f(z)$. Since both $P_1$ and $P_2$ are assumed to be nonzero, these two cases are not possible.

Now, if $c_0 = \lambda_1$ and $c_1 = \lambda_2$, then $T(f)(z) = (\lambda_1 - \lambda_2)f(0) + \lambda_2 f(z)$. It follows that $P_1(f)(z) = f(0)$ and $P_2(f)(z) = f(z) - f(0)$. Consequently, for all $\mu_1, \mu_2 \in \mathbb{T}$, $f \in \mathcal{S}^\infty$ and $z \in \mathbb{D}$, we have
\begin{equation*}
\mu_1 P_1(f)(z) + \mu_2 P_2(f)(z) = \mu_1 f(0) + \mu_2 (f(z) - f(0)),
\end{equation*}
which is clearly a surjective isometry on $\mathcal{S}^\infty$. It follows that the collection $\mathcal{C}$ forms a family of bi-circular projections. The case $c_0 = \lambda_2$ and $c_1 = \lambda_1$ is similar. This proves the second assertion and completes the proof. 
\end{proof}

\begin{thm} \label{formII}
Suppose the collection $\mathcal{C} = \{P_1, P_2\}$ is a family of generalized bi-circular idempotents on $\mathcal{S}^\infty$ corresponding to an isometry $T$ of the Form $(II)$. Then one of the following holds:
\begin{enumerate}
\item Each $P_i$ is the average of the identity operator and an isometric reflection. Moreover, we have $\lambda_1 + \lambda_2 = 0$, $c_1 = \pm \lambda_1$ and $\psi^2(z) = z$ for all $z \in \mathbb{D}$.
\item $P_i (f)(z) = \frac{\overline{c_0f(0)} - c_1 f(0) + (c_1 - \lambda_j)f(z)}{\lambda_i - \lambda_j},\ i,j = 1,2,\ i \neq j$, where $c_1 = \lambda_1$ or $\lambda_2$. Moreover, $\psi(z) = z$ for all $z \in \mathbb{D}$.
\end{enumerate} 
\end{thm}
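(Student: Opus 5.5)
I would follow the pattern of the proof of Theorem \ref{formI}, but the real-linearity of $T$ has to be handled with care. Proposition \ref{T0=0} gives $T(0)=0$, so
\[
T(f)(z)=\overline{c_0f(0)}+\int_{[0,z]}c_1f'(\psi(\xi))\,d\xi .
\]
This map is only real linear. Hence I cannot quote \cite[Lemma 1.1]{MDC}; I would derive the quadratic identity directly. By Lemma \ref{lemma}, $TP_i=\lambda_iP_i$, and by Equation \eqref{gbiidentity}, $P_i=(T-\lambda_jI)/(\lambda_i-\lambda_j)$. Since $T$ is real linear, it commutes with real scalars. The difficulty is that $T(\lambda g)\neq \lambda T(g)$ for complex $\lambda$ in the constant term. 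So I would expand $T(T f-\lambda_j f)$ carefully. Using $T(g)(0)=\overline{c_0g(0)}$ and $T(g)'=c_1g'\circ\psi$, the identity $T P_1=\lambda_1P_1$ splits into two equations.

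\textbf{The derivative part.} Differentiating gives
\[
c_1^2f'(\psi^2(z))-(\lambda_1+\lambda_2)c_1f'(\psi(z))+\lambda_1\lambda_2f'(z)=0,
\]
which is exactly Equation \eqref{main1der}. On derivatives $T$ acts complex linearly, since $(\lambda g)'\circ\psi=\lambda g'\circ\psi$. So the arguments of Theorem \ref{formI} apply verbatim. Taking $f=id$ gives $c_1\in\{\lambda_1,\lambda_2\}$. Choosing $f\in\mathcal{S}^\infty$ with $f'$ prescribed at the points $z,\psi(z),\psi^2(z)$ forces $\psi^2=id$, because $\lambda_1\lambda_2\neq0$. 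Such an $f$ exists because polynomials lie in $\mathcal{S}^\infty$, so Lagrange interpolation of $f'$ works.

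\textbf{The value-at-$0$ part.} This gives the consistency $\overline{c_0\,\overline{c_0 f(0)}}-\ldots$, i.e. $f(0)-\overline{\lambda_j}\,\overline{c_0 f(0)}\cdot(\ldots)$. Concretely, I would just record that it imposes conditions on $c_0$ relative to $\lambda_1,\lambda_2$. Testing with $f=1$ and $f=i$ gives some relation, but I do not need to pin $c_0$ down, since the statement leaves $c_0$ free.

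\textbf{Case split.}
\begin{enumerate}
\item Suppose $\psi(z)\neq z$ for some $z$. Pick $f$ with $f'(z)=0$ and $f'(\psi(z))=1$. Since $\psi^2(z)=z$, this gives $f'(\psi^2(z))=0$, so $(\lambda_1+\lambda_2)c_1=0$. Hence $\lambda_2=-\lambda_1$ and $c_1=\pm\lambda_1$. Then $P_i=(T+\lambda_iI)/(2\lambda_i)=(I+S)/2$ with $S=\overline{\lambda_i}T$. It remains to show $S^2=I$, which is where the constant term matters. I would argue that $S=\overline{\lambda_i}(\lambda_1P_1+\lambda_2P_2)=\pm(P_1-P_2)$. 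Then $S^2=P_1^2+P_2^2-P_1P_2-P_2P_1=P_1+P_2=I$, using idempotency, orthogonality and additivity, which hold because the maps are real linear. This route avoids computing the constant term entirely. $S$ is a surjective isometry because it is a unimodular multiple of $T$.
\item Suppose $\psi=id$. Then $T(f)(z)=\overline{c_0f(0)}-c_1f(0)+c_1f(z)$, and substituting this into $P_i=(T-\lambda_jI)/(\lambda_i-\lambda_j)$ gives the displayed formula, with $c_1\in\{\lambda_1,\lambda_2\}$.
\end{enumerate}

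\textbf{Main obstacle.} The step needing care is replacing \cite[Lemma 1.1]{MDC}, which assumes linearity. I would avoid it entirely by using $TP_i=\lambda_iP_i$ only at the derivative level, where everything is complex linear. The other point needing care is justifying $S^2=I$ via $P_1-P_2$, which I would handle through the real-linear algebra of Corollary \ref{gbireal}.
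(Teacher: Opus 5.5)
Your proposal is correct and follows essentially the same route as the paper. Like the paper, you use $TP_1=\lambda_1P_1$ with $P_1=(T-\lambda_2I)/(\lambda_1-\lambda_2)$ instead of \cite[Lemma 1.1]{MDC}, differentiate to get the quadratic identity in $f'$, take $f=id$, use interpolation, and split into the same two cases on $\psi$. Ignoring the value-at-$0$ part is legitimate, since it reduces to an identity and leaves $c_0$ free; and writing $S=\pm(P_1-P_2)$ to get $S^2=I$ from real linearity supplies the justification that the paper dismisses with ``Clearly''.
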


\begin{proof}

Let $\mathcal{C} = \{P_1, P_2\}$ be a family of generalized bi-circular idempotents on $\mathcal{S}^\infty$ corresponding to an isometry $T$ of Form $(II)$. Then $T = \lambda_1P_1 + \lambda_2 P_2$, where $\lambda_1, \lambda_2 \in \mathbb{T}$. Proposition \ref{T0=0} implies that $T(0) = 0$. Thus, $T$ takes the form	
\begin{equation*}
T(f)(z)=  \overline{c_0f(0)} + \int_{[0,z]}c_1f'(\psi(\xi))d\xi, \ \forall \ f \in \mathcal{S}^\infty \text{ and } z \in \mathbb{D}. 
\end{equation*}
Further,
\begin{equation} \label{p2}
P_i (f)(z) = \frac{T (f)(z) - \lambda_j f(z)}{\lambda_i - \lambda_j} =  \frac{1}{\lambda_i - \lambda_j} \left[ \overline{c_0f(0)} + \int_{[0,z]} c_1f'(\psi(\xi))d\xi - \lambda_j f(z)\right], 
\end{equation}
$i, j = 1,2,\ i \neq j$.

On differentiating for $i = 1$, we get
\begin{equation*}
P_1(f)'(z) = \frac{1}{\lambda_1-\lambda_2}\left[c_1f'(\psi(z)) - \lambda_2 f'(z)\right].
\end{equation*}
Taking $z=0$ in the equation \eqref{p2}, we have
\begin{equation*}
P_1(f)(0) = \frac{1}{\lambda_1-\lambda_2}\left[ \overline{c_0f(0)} - \lambda_2 f(0)\right].
\end{equation*}
Since $P_i$, $i = 1, 2$, is a  generalized bi-circular idempotent, we also have the identity
\begin{equation*} \label{identity}
TP_i (f)(z) = \lambda_i P_i (f)(z),\ i = 1,2, \ \forall \ f \in \mathcal{S}^\infty  \text{ and } z \in \mathbb{D}. 
\end{equation*}
It follows that 
\begin{align*}
\overline{c_0 P_1(f)(0)} &+ \int_{[0,z]}c_1 P_1(f)'(\psi(\xi))d\xi \\
& = \frac{\lambda_1}{\lambda_1-\lambda_2}\left[ \overline{ c_0f(0)} +\int_{[0,z]}c_1f'(\psi(\xi))d\xi - \lambda_2 f(z)\right].
\end{align*}
Putting the values of $P_1(f)(0)$ and $P_1(f)'(z)$, we have 
\begin{align*}
\frac{\overline{c_0}}{\overline{\lambda_1}-\overline{\lambda_2}} \left[\overline{ \overline{c_0f(0)}-\lambda_2 f(0) }\right] & + \frac{c_1}{\lambda_1-\lambda_2} \int_{[0,z]} \left[ c_1f'(\psi^2(\xi)) - \lambda_2 f'(\psi(\xi)) \right]  d\xi \\
& = \frac{\lambda_1}{\lambda_1-\lambda_2} \left[\overline{c_0f(0)}+\int_{[0,z]} c_1f'(\psi(\xi)) d\xi - \lambda_2 f(z)\right], 
\end{align*}
or
\begin{align*} 
\frac{\overline{c_0}}{\overline{\lambda_1}-\overline{\lambda_2}} \left[c_0f(0)- \overline{\lambda_2 f(0)}\right] & + \frac{c_1}{\lambda_1-\lambda_2} \int_{[0,z]} \left[ c_1f'(\psi^2(\xi)) - \lambda_2 f'(\psi(\xi)) \right]  d\xi  \notag \\ 
& = \frac{\lambda_1}{\lambda_1-\lambda_2} \left[\overline{c_0f(0)}+\int_{[0,z]} c_1f'(\psi(\xi)) d\xi - \lambda_2 f(z)\right]. 
\end{align*}	
After simplification, we get 	
\begin{equation*} \label{eqbl2}
\frac{1}{\overline{\lambda_1}-\overline{\lambda_2}} f(0) + \frac{\lambda_1\lambda_2 }{\lambda_1-\lambda_2}f(z) + \frac{c_1}{\lambda_1-\lambda_2} \int_{[0,z]} \left[ c_1f'(\psi^2(\xi)) - (\lambda_1+\lambda_2) f'(\psi(\xi)) \right] d\xi = 0.
\end{equation*}	
Differentiating the above equation
\begin{equation*}\label{main2der}
c_1^2f'(\psi^2(z)) - c_1(\lambda_1+\lambda_2) f'(\psi(z))  + \lambda_1 \lambda_2 f'(z) =0.
\end{equation*}	
Choosing $f'=1$ in the above equation
\begin{equation*}
c_1^2 -c_1(\lambda_1+\lambda_2) + \lambda_1 \lambda_2 =0  \implies  c_1 = \lambda_1,\lambda_2.
\end{equation*}	
Proceeding in the same way as we did in Theorem \ref{formI}, we conclude that $\psi^2(z) =z$ for all $z \in \mathbb{D}$. 
If $\psi (z) \neq z$ for some $z \in \mathbb{D}$, then $\lambda_1 + \lambda_2 = 0$. Moreover, $c_1 = \pm \lambda_1$. Equation \eqref{p2} implies that

\begin{equation*} \label{gbi2}
P_i = \frac{T - \lambda_j I}{\lambda_i - \lambda_j} = \frac{T + \lambda_i I}{2 \lambda_i} = \frac{I + S}{2},
\end{equation*}
where $S = \frac{1}{\lambda_i} T$. Clearly, $S^2 = I$. Thus, the proof of the first assertion is done. 

If $\psi(z) =z$ for all $z \in \mathbb{D}$, then $T(f)(z) = \overline{c_0f(0)} + c_1 (f(z) - f(0))$. It follows from Equation \eqref{p} that  
\begin{equation*}
P_i (f)(z) = \frac{\overline{c_0f(0)} - c_1 f(0) + (c_1 - \lambda_j)f(z)}{\lambda_i - \lambda_j},\ i,j = 1,2,\ i \neq j,
\end{equation*}
where  $c_1 = \lambda_1$ or $\lambda_2$. The proof of the second assertion is complete. 
\end{proof}

\begin{thm} \label{formIII}
Let $\mathcal{C} = \{P_1, P_2\}$ be a family of generalized bi-circular idempotents on $\mathcal{S}^\infty$ corresponding to an isometry $T$ of Form $(III)$. Then $c_0 = \lambda_1, \lambda_2$, $\psi(\overline{\psi(\overline{z})}) = z$ for all $z \in \mathbb{D}$ and 
\begin{equation*}
P_i (f)(z) =  \frac{1}{\lambda_i - \lambda_j} \left[c_0f(0)+\int_{[0,z]}\overline{c_1f'(\psi(\overline{\xi}))}d\xi - \lambda_j f(z)\right],\ i, j = 1,2,\ i \neq j. 
\end{equation*}
\end{thm}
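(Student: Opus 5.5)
We follow the pattern of Theorems \ref{formI} and \ref{formII}. By Proposition \ref{T0=0}, $T(0)=0$, so
\[
T(f)(z)=c_0f(0)+\int_{[0,z]}\overline{c_1f'(\psi(\overline{\xi}))}\,d\xi .
\]
Solving $T=\lambda_1P_1+\lambda_2P_2$ together with $P_1+P_2=I$ gives Equation \eqref{gbiidentity}. This is exactly the stated formula for $P_i$:
\[
P_i=\frac{T-\lambda_j I}{\lambda_i-\lambda_j},\qquad i\neq j .
\]
It remains to obtain the constraints on $c_0$ and $\psi$. For this we use the identity $TP_i=\lambda_iP_i$ from Lemma \ref{lemma}. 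Since $T$ is only real linear, we work with this identity directly rather than quoting \cite[Lemma 1.1]{MDC}. Substituting the expression for $P_i$ and using real linearity of $T$ (Proposition \ref{T0=0}) gives
\[
T^2(f)-\lambda_jT(f)=\lambda_iT(f)-\lambda_i\lambda_jf .
\]
Hence
\[
T^2(f)-(\lambda_1+\lambda_2)T(f)+\lambda_1\lambda_2f=0 \quad\text{for all } f\in\mathcal{S}^\infty .
\]

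Next we compute $T^2(f)$ as in the proof of the Form (III) reflection theorem:
\[
T^2(f)(z)=c_0^2f(0)+\int_{[0,z]}f'\bigl(\psi(\overline{\psi(\overline{\xi})})\bigr)\,d\xi .
\]
The $c_1$'s cancel because $c_1\overline{c_1}=1$. Evaluating the quadratic identity at $z=0$ gives
\[
\bigl(c_0^2-(\lambda_1+\lambda_2)c_0+\lambda_1\lambda_2\bigr)f(0)=0 .
\]
Taking $f=1$ then yields $(c_0-\lambda_1)(c_0-\lambda_2)=0$, so $c_0\in\{\lambda_1,\lambda_2\}$.

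Differentiating the identity and writing $\phi(z)=\psi(\overline{\psi(\overline{z})})$, which is a holomorphic automorphism of $\mathbb{D}$, gives for all $f\in\mathcal{S}^\infty$ and $z\in\mathbb{D}$
\[
f'(\phi(z))-(\lambda_1+\lambda_2)\,\overline{c_1f'(\psi(\overline{z}))}+\lambda_1\lambda_2f'(z)=0 .
\]
To show $\phi(z)=z$, suppose $\phi(z_0)\ne z_0$ for some $z_0$. We then choose $f$ with $f'(z_0)=1$ and $f'$ vanishing at $\phi(z_0)$ and at $\psi(\overline{z_0})$. A polynomial $f'$ does this, and since $f'$ is a polynomial it is bounded, so $f\in\mathcal{S}^\infty$. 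The identity then forces $\lambda_1\lambda_2=0$, which is a contradiction.

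The main obstacle is the case $\psi(\overline{z_0})=z_0$, where the three prescribed points are not distinct. There we instead take $f'$ with $f'(\phi(z_0))=0$ and $f'(z_0)=t$ for a free complex parameter $t$, for instance $f'(w)=t(w-\phi(z_0))/(z_0-\phi(z_0))$. The identity becomes
\[
-(\lambda_1+\lambda_2)\overline{c_1}\,\overline{t}+\lambda_1\lambda_2\,t=0 \quad\text{for all } t\in\mathbb{C}.
\]
The coefficients of $t$ and $\overline{t}$ must vanish separately, which again gives $\lambda_1\lambda_2=0$, a contradiction. Hence $\psi(\overline{\psi(\overline{z})})=z$ for all $z\in\mathbb{D}$, which completes the plan.
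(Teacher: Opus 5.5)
\textbf{The gap.} The step that fails is the derivation of $T^2(f)-(\lambda_1+\lambda_2)T(f)+\lambda_1\lambda_2f=0$.

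To get $T\bigl(\tfrac{T(f)-\lambda_jf}{\lambda_i-\lambda_j}\bigr)=\tfrac{T^2(f)-\lambda_jT(f)}{\lambda_i-\lambda_j}$ you must pull the complex scalars $(\lambda_i-\lambda_j)^{-1}$ and $\lambda_j$ through $T$. Real linearity does not permit that. For Form (III) with $T(0)=0$,
\[
T(\alpha g)(z)=\alpha\,c_0g(0)+\overline{\alpha}\int_{[0,z]}\overline{c_1g'(\psi(\overline{\xi}))}\,d\xi ,
\]
so the scalar is conjugated in the integral part. This is exactly the obstruction you flagged when declining to quote the linear lemma, and it reappears here.

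\textbf{The identity is false.} Take $a=0$, $\mu=-1$ (so $\psi(z)=z$) and $c_0=c_1=1$, i.e.\ $T(f)(z)=f(0)+\overline{f(\overline{z})}-\overline{f(0)}$, with $\lambda_1=1$, $\lambda_2=i$. A direct computation shows $TP_i=\lambda_iP_i$ for $P_1=(T-iI)/(1-i)$ and $P_2=(T-I)/(i-1)$. Hence $\{P_1,P_2\}$ is a family of generalized bi-circular idempotents. Yet for $f(z)=iz$ one has $T(f)(z)=-iz$, $T^2(f)(z)=iz$, and $T^2(f)-(1+i)T(f)+if=2(i-1)z\neq0$.

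Worse, once $\phi=\mathrm{id}$ your differentiated identity reads $(1+\lambda_1\lambda_2)f'(z)=(\lambda_1+\lambda_2)\overline{c_1f'(\psi(\overline{z}))}$. Your own point-separation argument then forces $\{\lambda_1,\lambda_2\}=\{1,-1\}$, contradicting this example.

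So your proof that $\phi=\mathrm{id}$ rests on a false equation. The ``main obstacle'' case analysis is an artifact of the spurious term $-(\lambda_1+\lambda_2)\overline{c_1f'(\psi(\overline{z}))}$. Your conclusion $c_0\in\{\lambda_1,\lambda_2\}$ does survive, because evaluating $TP_if=\lambda_iP_if$ at $z=0$ only involves the complex-linear part $g\mapsto c_0g(0)$ of $T$.

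\textbf{The repair.} This is what the paper does: compute $T(P_1f)$ directly and keep track of the conjugations. Use
\[
P_1(f)(0)=\frac{(c_0-\lambda_2)f(0)}{\lambda_1-\lambda_2},\qquad P_1(f)'(w)=\frac{\overline{c_1f'(\psi(\overline{w}))}-\lambda_2f'(w)}{\lambda_1-\lambda_2},
\]
together with
\[
\frac{1}{\overline{\lambda_1}-\overline{\lambda_2}}=-\frac{\lambda_1\lambda_2}{\lambda_1-\lambda_2},\qquad \frac{\overline{\lambda_2}}{\overline{\lambda_1}-\overline{\lambda_2}}=-\frac{\lambda_1}{\lambda_1-\lambda_2}.
\]
With these, the terms $\overline{c_1f'(\psi(\overline{\xi}))}$ cancel between the two sides of $TP_1f=\lambda_1P_1f$. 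What remains is Equation \eqref{main3}, which after multiplying by $\lambda_1-\lambda_2$ becomes
\[
(c_0-\lambda_1)(c_0-\lambda_2)f(0)+\lambda_1\lambda_2\int_{[0,z]}\bigl[f'(\xi)-f'(\phi(\xi))\bigr]\,d\xi=0 .
\]
Taking $f=1$ gives $c_0\in\{\lambda_1,\lambda_2\}$. Differentiating gives $f'(\phi(z))=f'(z)$ for all $f\in\mathcal{S}^\infty$, so $\phi(z)=z$ (take $f'(w)=w$), with no case distinction needed. The formula for $P_i$ is, as you say, just Equation \eqref{gbiidentity}.
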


\begin{proof}

Let $\mathcal{C} = \{P_1, P_2\}$ be a family of generalized bi-circular idempotents on $\mathcal{S}^\infty$ corresponding to an isometry $T$ of Form $(III)$. Then $T = \lambda_1P_1 + \lambda_2 P_2$, where $\lambda_1, \lambda_2 \in \mathbb{T}$. Proposition \ref{T0=0} implies that $T(0) = 0$. Thus, $T$ takes the form			
\begin{equation*}
T(f)(z)= c_0f(0) + \int_{[0,z]} \overline{c_1f'(\psi(\overline{\xi}))}d\xi,\ \forall \ f \in \mathcal{S}^\infty \text{ and } z \in \mathbb{D}. 
\end{equation*}
We also have,
\begin{equation} \label{p3}
P_i (f)(z) = \frac{T(f)(z) - \lambda_j f(z)}{\lambda_i - \lambda_j}  =  \frac{1}{\lambda_i - \lambda_j} \left[c_0f(0)+\int_{[0,z]}\overline{c_1f'(\psi(\overline{\xi}))}d\xi - \lambda_j f(z)\right], 
\end{equation}
$i, j = 1,2,\ i \neq j$.

Differentiating the above equation for $i = 1$, we get
\begin{equation*}
P_1(f)'(z) = \frac{1}{\lambda_1-\lambda_2} \left[\overline{c_1f'(\psi(\overline{z}))} - \lambda_2 f'(z)\right].
\end{equation*}
Taking $z=0$ in Equation \eqref{p3}, we obtain
\begin{equation*}
P_1 (f)(0) = \frac{1}{\lambda_1-\lambda_2} \left[c_0f(0)-\lambda_2 f(0)\right].
\end{equation*}
Using all these information in the identity $TP_1(f)(z) = \lambda_1 P_1(f)(z)$, 
\begin{align*}
c_0 P_1(f)(0) + \int_{[0,z]} \overline{c_1P_1(f)'(\psi(\overline{\xi}))}d\xi
= \frac{\lambda_1}{\lambda_1-\lambda_2} \left[c_0f(0)+\int_{[0,z]}\overline{c_1f'(\psi(\overline{\xi}))}d\xi - \lambda_2 f(z)\right],
\end{align*}
or
\begin{align*}
\frac{c_0}{\lambda_1-\lambda_2}[c_0f(0) - \lambda_2 f(0)] + \frac{\overline{c_1}}{\overline{\lambda_1}-\overline{\lambda_2}} \int_{[0,z]}  \left[\overline{\overline{c_1f'(\psi(\overline{\psi(\overline{\xi})}))} - \lambda_2 f'(\psi(\overline{\xi}))}\right]  d\xi \\
= \frac{\lambda_1}{\lambda_1-\lambda_2}\left[c_0f(0)  +\int_{[0,z]}\overline{c_1f'(\psi(\overline{\xi}))}d\xi - \lambda_2 f(z)\right], 
\end{align*}
or
\begin{equation}\label{main3}
\frac{1}{\lambda_1-\lambda_2}[(c_0^2 - c_0\lambda_1 -c_0\lambda_2) f(0) + \lambda_1\lambda_2 f(z)] + \frac{1}{\overline{\lambda_1}-\overline{\lambda_2}} \int_{[0,z]} f'(\psi(\overline{\psi(\overline{\xi})})) d\xi = 0.
\end{equation}
Choosing $f = 1$, we get 
\begin{equation*}	
c_0^2-c_0 \lambda_1 -c_0 \lambda_2 + \lambda_1\lambda_2 =0  \implies  c_0 =\lambda_1,\lambda_2.
\end{equation*}
Differentiating Equation \eqref{main3}, we obtain 
\begin{equation*} \label{main3der}
\frac{\lambda_1\lambda_2}{\lambda_1-\lambda_2}f'(z) + \frac{1}{\overline{\lambda_1} -\overline{\lambda_2}} f'(\psi(\overline{\psi(\overline{z})})) = 0. 
\end{equation*}	
This implies that $\psi(\overline{\psi(\overline{z})}) = z$ for all $z \in \mathbb{D}$. Therefore, 
\begin{equation*}
P_i (f)(z) =  \frac{1}{\lambda_i - \lambda_j} \left[c_0f(0)+\int_{[0,z]}\overline{c_1f'(\psi(\overline{\xi}))}d\xi - \lambda_j f(z)\right],\ i, j = 1,2,\ i \neq j. 
\end{equation*}
This completes the proof.
\end{proof}	

\begin{thm} \label{formIV}
If $\mathcal{C} = \{P_1, P_2\}$ is a family of generalized bi-circular idempotents on $\mathcal{S}^\infty$ corresponding to an isometry $T$ of the Form $(IV)$, then $\psi(\overline{\psi(\overline{z})}) = z$ for all $z \in \mathbb{D}$ and 
\begin{equation*} 
P_i (f)(z) =  \frac{1}{\lambda_i-\lambda_j}\left[\overline{c_0f(0)} + \int_{[0,z]}\overline{c_1f'(\psi(\overline{\xi}))}d\xi - \lambda_j f(z)\right],\ i, j = 1,2,\ i \neq j. 
\end{equation*}
\end{thm}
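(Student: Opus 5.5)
The argument will follow the pattern of Theorems \ref{formII} and \ref{formIII}. By Proposition \ref{T0=0}, $T(0)=0$, so
\[
T(f)(z)=\overline{c_0f(0)}+\int_{[0,z]}\overline{c_1f'(\psi(\overline{\xi}))}\,d\xi .
\]
Equation \eqref{gbiidentity} then gives directly
\[
P_i(f)=\frac{T(f)-\lambda_j f}{\lambda_i-\lambda_j}, \qquad i\neq j .
\]
This is exactly the displayed formula for $P_i$. The real content of the theorem is therefore the condition $\psi(\overline{\psi(\overline{z})})=z$.

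To obtain it, use the identity $TP_1=\lambda_1P_1$ from Lemma \ref{lemma}. First compute the two ingredients from the formula for $P_1$:
\[
P_1(f)(0)=\frac{\overline{c_0f(0)}-\lambda_2f(0)}{\lambda_1-\lambda_2},\qquad
P_1(f)'(z)=\frac{\overline{c_1f'(\psi(\overline z))}-\lambda_2f'(z)}{\lambda_1-\lambda_2}.
\]
Substitute these into $T(P_1f)(z)=\lambda_1P_1(f)(z)$. Because Form (IV) conjugates both $P_1(f)(0)$ and $P_1(f)'(\psi(\overline\xi))$, the scalar $\lambda_1-\lambda_2$ in the denominator becomes $\overline{\lambda_1}-\overline{\lambda_2}$. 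The double conjugate $\overline{\overline{c_1 f'(\cdot)}}$ collapses, using $|c_1|=1$, to $f'(\psi(\overline{\psi(\overline\xi)}))$. The cross term is $-\overline{c_1}\,\overline{\lambda_2}\,\overline{f'(\psi(\overline\xi))}$.

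Next, differentiate in $z$ to kill the constants and obtain an identity of the form
\[
\frac{1}{\overline{\lambda_1}-\overline{\lambda_2}}\Big[f'(\psi(\overline{\psi(\overline z)}))-\overline{\lambda_2}\,\overline{c_1f'(\psi(\overline z))}\Big]
=\frac{\lambda_1}{\lambda_1-\lambda_2}\Big[\overline{c_1f'(\psi(\overline z))}-\lambda_2f'(z)\Big].
\]
Use $\overline{\lambda_k}=1/\lambda_k$ to rewrite
\[
\frac{1}{\overline{\lambda_1}-\overline{\lambda_2}}=\frac{-\lambda_1\lambda_2}{\lambda_1-\lambda_2}.
\]
With this, the two $\overline{c_1f'(\psi(\overline z))}$ terms carry coefficients $\frac{\lambda_1}{\lambda_1-\lambda_2}$ on each side, so they cancel. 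This cancellation is the step to check most carefully. After multiplying through by $\lambda_1-\lambda_2$, what remains is
\[
-\lambda_1\lambda_2\, f'(\psi(\overline{\psi(\overline z)}))+\lambda_1\lambda_2 f'(z)=0,
\]
that is, $f'(\psi(\overline{\psi(\overline z)}))=f'(z)$ for all $f\in\mathcal S^\infty$. Taking $f(z)=z^2/2$, so that $f'=\mathrm{id}$, yields $\psi(\overline{\psi(\overline z)})=z$ for every $z\in\mathbb D$.

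If the cancellation fails because of a sign or conjugation slip, the fallback is the separating argument of Theorem \ref{formI}. Suppose $w=\psi(\overline{\psi(\overline z)})\neq z$ for some $z$. Choose a polynomial $f$ with $f'(z)=1$ and $f'(w)=0$, and additionally $f'(\psi(\overline z))=0$ when that point differs from $z$ and $w$. Plugging into the differentiated identity gives $\lambda_1\lambda_2=0$, contradicting $\lambda_1,\lambda_2\in\mathbb T$.

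The main obstacle is the bookkeeping of conjugates when $T$ is applied to $P_1 f$, in particular verifying that the mixed terms involving $\overline{f'(\psi(\overline z))}$ cancel exactly. Everything else is routine. Unlike Form (III), no constraint on $c_0$ arises: the $f(0)$ terms carry both $f(0)$ and $\overline{f(0)}$, and the theorem does not assert anything about them.
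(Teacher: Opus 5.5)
Your proof is correct and follows the paper's argument. Both substitute $P_1(f)(0)$ and $P_1(f)'$ into $TP_1=\lambda_1P_1$, cancel the mixed $\overline{c_1f'(\psi(\overline{z}))}$ terms using $1/(\overline{\lambda_1}-\overline{\lambda_2})=-\lambda_1\lambda_2/(\lambda_1-\lambda_2)$ (the cancellation does work, so your fallback is unnecessary), and differentiate to get $f'(\psi(\overline{\psi(\overline{z})}))=f'(z)$. The only differences are cosmetic: you differentiate before simplifying rather than after, and you spell out the choice $f'=\mathrm{id}$ that the paper leaves implicit.
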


\begin{proof}

Let $\mathcal{C} = \{P_1, P_2\}$ be a family of generalized bi-circular idempotents on $\mathcal{S}^\infty$ corresponding to an isometry $T$ of Form $(IV)$. Then $T = \lambda_1P_1 + \lambda_2 P_2$, where $\lambda_1, \lambda_2 \in \mathbb{T}$. Proposition \ref{T0=0} implies that $T(0) = 0$. Thus, $T$ takes the form		
\begin{equation*}
T(f)(z)= \overline{c_0f(0)} + \int_{[0,z]} \overline{c_1f'(\psi(\overline{\xi}))}d\xi,\ \forall \ f \in \mathcal{S}^\infty \text{ and } z \in \mathbb{D}. 
\end{equation*}
Further,
\begin{equation*} \label{p4}
P_i (f)(z) = \frac{T (f)(z) - \lambda_j f(z)}{\lambda_i - \lambda_j} =  \frac{1}{\lambda_i-\lambda_j}\left[\overline{c_0f(0)} + \int_{[0,z]}\overline{c_1f'(\psi(\overline{\xi}))}d\xi - \lambda_j f(z)\right],\ 
\end{equation*}
$i, j = 1,2,\ i \neq j$.
This implies that
\begin{equation*}
P_1(f)'(z) =\frac{1}{\lambda_1-\lambda_2} \left[\overline{c_1f'(\psi(\overline{z}))} - \lambda_2 f'(z)\right],
\end{equation*}
and
\begin{equation*}
P_1(f)(0) = \frac{1}{\lambda_1-\lambda_2}\left[ \overline{c_0f(0)} - \lambda_2 f(0)\right].
\end{equation*}
$T P_1(f)(z) = \lambda_1 P_1(f)(z) \implies$
\begin{align*}
\overline{c_0 P_1(f)(0)} &+ \int_{[0,z]} \overline{c_1P_1(f)'(\psi(\overline{\xi}))}d\xi \\ 
& =  \frac{\lambda_1}{\lambda_1-\lambda_2}\left[\overline{c_0f(0)}+\int_{[0,z]}\overline{c_1f'(\psi(\overline{\xi}))}d\xi - \lambda_2 f(z)\right].
\end{align*}
It follows that 
\begin{align*}
\frac{\overline{c_0}}{\overline{\lambda_1} - \overline{\lambda_2}} \left[\overline{ \overline{c_0f(0)} - \lambda_2 f(0)}\right]  
& + \frac{\overline{c_1}}{\overline{\lambda_1} - \overline{\lambda_2}} \int_{[0,z]} \left[\overline{\overline{c_1f'(\psi(\overline{\psi(\overline{\xi})}))}d\xi - \lambda_2 f'(\psi(\overline{\xi}))}\right] d\xi \\
& = \frac{\lambda_1}{\lambda_1-\lambda_2}\left[\overline{c_0f(0)} +  \int_{[0,z]}\overline{c_1f'(\psi(\overline{\xi}))}d\xi - \lambda_2 f(z)\right],
\end{align*}
or  
\begin{align*} \label{eqbl4}
\frac{1}{\overline{\lambda_1} - \overline{\lambda_2}}f(0)+ \frac{\lambda_1\lambda_2}{\lambda_1-\lambda_2} f(z)  + \frac{1}{\overline{\lambda_1} - \overline{\lambda_2}}  \int_{[0,z]} f'(\psi(\overline{\psi(\overline{\xi})})) d\xi = 0.
\end{align*}	
Now, differentiating the above equation, we get 
\begin{equation*} \label{main4der}
\frac{ \lambda_1\lambda_2}{\lambda_1-\lambda_2} f'(z) + \frac{1}{\overline{\lambda_1} - \overline{\lambda_2}} f'(\psi(\overline{\psi(\overline{z})}))= 0.
\end{equation*}	
This implies that $\psi(\overline{\psi(\overline{z})}) = z$ for all $z \in \mathbb{D}$. Hence, 
\begin{equation*} 
P_i (f)(z) =  \frac{1}{\lambda_i-\lambda_j}\left[\overline{c_0f(0)} + \int_{[0,z]}\overline{c_1f'(\psi(\overline{\xi}))}d\xi - \lambda_j f(z)\right],\ i, j = 1,2,\ i \neq j. 
\end{equation*}
This completes the proof. 	
\end{proof}

We end this paper with the following remark. 

\begin{rem}
Although Corollary \ref{gbireal} says that any $GBI$ is a real linear map, it is clear from the structures of $GBI$ obtained in Theorems \ref{formII}, \ref{formIII}, and \ref{formIV} that they are nonlinear maps.
\end{rem}

\section{Acknowledgements}

The first-named author gratefully acknowledges the Ministry of Education, New Delhi (India), for financial support, and IIIT Allahabad, India, for providing the resources and infrastructure to carry out this research. The second-named author is partially supported by Anusandhan National Research Foundation (MATRICS) grant No. MTR/2022/000710.

\end{document}